\newtheorem{theorem}{Theorem}[section]
\newtheorem{corollary}{Corollary}
\newtheorem{lemma}[theorem]{Lemma}
\newtheorem{proposition}{Proposition}
\theoremstyle{definition}
\newtheorem{definition}[theorem]{Definition}
\newtheorem{remark}{Remark}
\title[A variational formulation via bipotentials]
      {A variational formulation for constitutive laws described by bipotentials}
\author[Marius Buliga, G\'ery de Saxc\'e and Claude Vall\'ee]{}
\subjclass{Primary: 74A20, 49J40; Secondary: 26B25.}
 \keywords{Bipotentials theory, Variational principles, Nonassociated
 constitutive laws.}
 \email{Marius.Buliga@imar.ro}
 \email{gery.desaxce@univ-lille1.fr}
 \email{vallee@lms.univ-poitiers.fr}
\thanks{The first author is partially supported by the grant 
"Continuous modeling of advanced materials in virtual fabrication" 
COMOD PCCE - ID 100. All authors acknowledge the support from the European 
Associated Laboratory "Math Mode"  associating the 
Laboratoire de Math\'ematiques de l'Universit\'e Paris-Sud (UMR 8628) and the 
"Simion Stoilow" Institute of Mathematics of the Romanian Academy.}
\begin{document}
\maketitle

% Enter the first author's name and address:
\centerline{\scshape Marius Buliga}
\medskip
{\footnotesize
% please put the address of the first author
 \centerline{"Simion Stoilow" Institute of Mathematics of the Romanian Academy,}
   \centerline{ PO BOX 1-764,014700 Bucharest, Romania}
} % Do not forget to end the {\footnotesize by the sign }

\medskip

\centerline{\scshape G\'ery de Saxc\'e}
\medskip
{\footnotesize
 % please put the address of the second  and third author
 \centerline{Laboratoire de M\'ecanique de Lille, UMR CNRS 8107,}
   \centerline{Universit\'e des Sciences et 
  Technologies de Lille,}
   \centerline{B\^atiment Boussinesq, Cit\'e Scientifique, 59655 Villeneuve d'Ascq cedex, 
 France}
}

\medskip

\centerline{\scshape Claude  Vall\'ee}
\medskip
{\footnotesize
 % please put the address of the second  and third author
 \centerline{Institut Pprime, UPR CNRS 3346,}
   \centerline{ bd M. et P. Curie, t\'el\'eport 2, BP 30179, 86962 Futuroscope-Chasseneuil cedex, 
France}
}

\bigskip

%The abstract of your paper
\begin{abstract}
Inspired by the  algorithm for solving the discretisation in time of the 
evolution problem for an implicit standard material,  
presented in  \cite{bergadesaxce}, we propose a variational formulation in terms
of bipotentials. 
\end{abstract}

\section{Introduction}

In the paper \cite{bergadesaxce} Berga and de Saxc\'e propose a bipotential 
for the constitutive law of a soil and further they proceed with the 
variational formulation of this model. 

We are interested in the precise formulation of the model, especially we want to understand from a mathematical viewpoint the recipe proposed in \cite{bergadesaxce} for using bipotentials in order to get a variational formulation of their model. We regard this as the first step towards the 
establishment of a general variational theory of bipotentials.

The following paragraph, extracted from \cite{bergadesaxce} page 414, is 
revealing for two reasons:  (a) the understanding of the motivation for 
introducing the bifunctional in order to adapt the Uzawa algorithm for implicit 
constitutive laws; (b) the imprecision concerning the understanding of the 
proposed new algorithm, related to the fact that, as we shall see, the simultaneous minimization of the bifunctional is not in fact how the algorithm 
works. 

"{\em One of the advantages of the new formulation is  to extend the classical 
Calculus of Variations to non associated constitutive laws. In the theoretical frame of the Implicit Standard Materials, a new functional, called bifunctional, 
is introduced, depending on both the displacement and stress field. The exact 
solution of the Boundary Value Problem corresponds to the simultaneous minimization of the bifunctional, firstly with respect to kinematically admissible displacement fields, when the stress field is equal with the exact one, and secondly with respect to statically admissible stress fields, when the displacement field is the exact one. The two minimization problems are the direct extension of the dual variational principles of displacements and stresses.}"

The notion of bipotential (definition \ref{def2}) has been introduced in 
\cite{saxfeng}, in order to formulate a large family of non associated 
constitutive laws in terms of convex analysis. The basic idea is explained 
further in few words. In Mechanics the associate constitutive laws are simply 
relations $y \in \partial \phi (x)$, with $\phi: X \rightarrow \mathbb{R} \cup 
\left\{ + \infty \right\}$ a convex and lower semicontinuous function. By
Fenchel inequality such a relation is equivalent with $\displaystyle 
\phi(x) + \phi^{*}(y) = \langle x, y \rangle$, where $\phi^{*}$ is the Fenchel
conjugate of $\phi$. It has been noticed that often in the mathematical 
study of problems related to associated constitutive laws enters not the 
function $\phi$, but the expression 
$$ b(x,y) = \phi(x) + \phi^{*}(y)$$
which we call "separable bipotential". The idea is then to use as 
a basic notion the one of bipotential $b: X \times Y \rightarrow \mathbb{R} \cup 
\left\{ + \infty \right\}$, which is convex and lsc in each argument and
satisfies a generalization of the Fenchel inequality. To non associated
constitutive laws thus corresponds bipotentials which are not separable.

There are many  such laws which can 
be studied with the help of bipotentials, as witnessed by the papers listed
further. In many of these papers bipotentials are used for numerical purposes
and several ad hoc algorithms have been suggested and exploited for
applications.  Here is a partial list of constitutive laws which have been 
described by bipotentials: non-associated Dr\"ucker-Prager \cite{sax boush KIELCE 93}  and 
Cam-Clay models \cite{sax BOSTON 95} in soil mechanics, 
cyclic Plasticity (\cite{sax CRAS 92},\cite{bodo sax EJM 01}) 
and Viscoplasticity \cite{hjiaj bodo CRAS 00} of metals with non linear 
kinematical hardening rule, Lemaitre's damage law \cite{bodo}, the coaxial 
laws (\cite{sax boussh 2},\cite{vall leri CONST 05}), the Coulomb's friction 
law \cite{saxfeng}, \cite{sax CRAS 92}, \cite{boush chaa IJMS 02}, \cite{feng hjiaj CM 06}, \cite{fort hjiaj CG 02}, 
\cite{hjiaj feng IJNME 04}, \cite{sax boush KIELCE 93}, \cite{sax feng IJMCM
98}, \cite{laborde}, \cite{bipo3}.
A complete survey can be found in \cite{sax boussh 2}.  

Later we started in \cite{bipo1} \cite{bipo2} \cite{bipo3} a mathematical 
study of bipotentials and their relation with convex analysis. This paper is 
another contribution along this subject, concerning mathematically sound
variational formulations and algorithms for numerically solving the quasistatic
evolution problem for constitutive laws of implicit standard materials. For another paper which contains a
variational formulation via bipotentials for the particular case of separated
bipotentials, see \cite{niculescu}.

\section{Notations and prerequisites from convex analysis}
\label{secnot}

 $X$ and $Y$ are topological, locally convex, real vector spaces of dual 
variables $x \in X$ and $y \in Y$, with the duality product 
$\langle \cdot , \cdot \rangle : X \times Y \rightarrow \mathbb{R}$. 
We shall suppose that $X, Y$ have topologies compatible with the duality 
product, that is: any  continuous linear functional on $X$ (resp. $Y$) 
has the form $x \mapsto \langle x,y\rangle$, for some $y \in Y$ (resp. 
$y \mapsto \langle x,y\rangle$, for some  $x \in X$). 
We use the notations: 
\begin{enumerate} 
\item[-] $\displaystyle \bar{\mathbb{R}} = \mathbb{R}\cup 
\left\{ +\infty \right\}$; 
\item[-] the domain of a function  $\displaystyle \phi: X \rightarrow
\bar{\mathbb{R}}$ is $dom \, \phi = \left\{ x \in X \mbox{ : } \phi(x) \in
\mathbb{R} \right\}$; 
\item[-] $\displaystyle \Gamma_{0}(X) = \left\{ \phi: X \rightarrow
\bar{\mathbb{R}} 
\mbox{ : } \phi \mbox{ is lsc and } dom \phi \not = \emptyset \right\}$; 
\item[-] for any convex and closed set $A \subset X$, its  indicator function,  
$\displaystyle \Psi_{A}$, is defined by 
$$\Psi_{A} (x) = \left\{ \begin{array}{ll}
0 & \mbox{ if } x \in A \\ 
+\infty & \mbox{ otherwise } 
\end{array} \right. $$
\item[-] the subgradient of a function $\displaystyle \phi: X \rightarrow
\bar{\mathbb{R}}$ at a point $x \in X$ is the (possibly empty) set: 
$$\partial \phi(x) = \left\{ u \in Y \mid \forall z \in X  \  \langle z-x, u \rangle \leq \phi(z) - \phi(x) \right\} \  .$$ 
\item[-] the inf-convolution of two functions $\displaystyle \phi, \psi \in
\Gamma_{0}(X)$ is the function $\phi \square \psi \in \Gamma_{0}(X)$ defined by:
for any $x \in X$ 
$$\phi \square \psi (x) \, = \, \inf \left\{ \phi(u) + \psi(v) \mbox{ : } u+v=x
\right\}$$
\end{enumerate}

\section{Bipotentials and syncs}

\begin{definition} A bipotential is a function $b: X \times Y \rightarrow
 \bar{\mathbb{R}}$, with the properties: 
\begin{enumerate}
\item[(a)] for any $x \in X$, if $dom \, b(x, \cdot) \not = \emptyset$ then 
$\displaystyle b(x, \cdot) \in \Gamma_{0}(X)$;  for any $y \in Y$, if $dom \,
b(\cdot, y) \not = \emptyset$ then 
$\displaystyle b(\cdot, y) \in \Gamma_{0}(Y)$; 
 \item[(b)] for any $x \in X , y\in Y$ we have $\displaystyle b(x,y) \geq \langle x, y \rangle$; 
\item[(c)]  for any $(x,y) \in X \times Y$ we have the equivalences: 
\begin{equation}
y \in \partial b(\cdot , y)(x) \ \Longleftrightarrow \ x \in \partial b(x, \cdot)(y)  \ \Longleftrightarrow \ b(x,y) = 
\langle x , y \rangle \ .
\label{equiva}
\end{equation}
\end{enumerate}
The graph of $b$ is 
\begin{equation}
M(b) \ = \ \left\{ (x,y) \in X \times Y \ \mid \ b(x,y) = \langle x, y \rangle \right\} \  .
\label{mb}
\end{equation}
\label{def2}
\end{definition}

Bipotentials are related to syncronised convex functions, defined further. 

\begin{definition}
A sync (syncronised convex function) is a function $c: X \times Y 
[0,+\infty]$ with the properties: 
\begin{enumerate}
\item[(a)] for any $x \in X$, if $dom \, c(x, \cdot) \not = \emptyset$ then 
$\displaystyle c(x, \cdot) \in \Gamma_{0}(X)$;  for any $y \in Y$, if $dom \,
c(\cdot, y) \not = \emptyset$ then 
$\displaystyle c(\cdot, y) \in \Gamma_{0}(Y)$; 
\item[(b)] for any $x \in X$, if  $dom \, c(x,\cdot) \not = \emptyset$ and 
the minimum  $\min\left\{ c(x,y) \mbox{ : } y \in
Y \right\}$ exists then this minimum equals $0$; for any $y \in X$, if  $dom \,
c(\cdot, y) \not = \emptyset$ and 
the minimum  $\min\left\{ c(x,y) \mbox{ : } x \in
X \right\}$ exists then this minimum equals $0$. 
\end{enumerate}
\label{defsync}
\end{definition}

\begin{proposition}
A function $b: X \times Y \rightarrow
 \bar{\mathbb{R}}$ is a bipotential if and only if the function 
$c: X \times Y \rightarrow
 \bar{\mathbb{R}}$, $c(x,y) = b(x,y) - \langle x, y \rangle$ is a sync. 
\label{psync}
\end{proposition}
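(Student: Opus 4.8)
The plan is to exploit the fact that subtracting the pairing $\langle \cdot, \cdot \rangle$ from $b$ changes neither convexity, lower semicontinuity, nor domains. For fixed $x$ the functions $b(x,\cdot)$ and $c(x,\cdot) = b(x,\cdot) - \langle x, \cdot \rangle$ differ by the continuous linear functional $\langle x, \cdot \rangle$ on $Y$, and symmetrically for fixed $y$; hence $dom\, b(x,\cdot) = dom\, c(x,\cdot)$, $dom\, b(\cdot,y) = dom\, c(\cdot,y)$, and condition (a) of Definition \ref{def2} holds for $b$ if and only if condition (a) of Definition \ref{defsync} holds for $c$. Next I would note that condition (b) of Definition \ref{def2}, namely $b(x,y) \geq \langle x, y \rangle$ for all $(x,y)$, is exactly the statement that $c$ takes its values in $[0,+\infty]$, which is built into the codomain requirement for a sync. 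So the real content reduces to showing that, granting (a) and $c \geq 0$, condition (c) of Definition \ref{def2} is equivalent to condition (b) of Definition \ref{defsync}.

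The second step is to rewrite the subdifferential conditions in (\ref{equiva}) as minimization statements. Fixing $x$ with $dom\, c(x,\cdot) \neq \emptyset$ and unwinding the definition of the subgradient from Section \ref{secnot} applied to $b(x,\cdot) \colon Y \to \bar{\mathbb{R}}$, the membership $x \in \partial b(x,\cdot)(y)$ means that for all $w \in Y$ one has $\langle x, w - y \rangle \leq b(x,w) - b(x,y)$; moving the linear terms, this is precisely $c(x,w) \geq c(x,y)$ for all $w \in Y$, i.e. $y$ is a global minimizer of $c(x,\cdot)$ on $Y$. Symmetrically, $y \in \partial b(\cdot,y)(x)$ if and only if $x$ is a global minimizer of $c(\cdot,y)$ on $X$; and $b(x,y) = \langle x, y \rangle$ is just $c(x,y) = 0$.

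With these translations the equivalence follows in both directions, and this is the one place where $c \geq 0$ is used. If $b$ is a bipotential and the minimum of $c(x,\cdot)$ exists, attained at some $y_{0}$, then by (the translated) condition (c) we get $c(x,y_{0}) = 0$, so this minimum is $0$; the symmetric argument handles the variable $x$, yielding condition (b) of Definition \ref{defsync}. Conversely, if $c$ is a sync and $y_{0}$ minimizes $c(x,\cdot)$, then the minimum exists, hence equals $0$, so $c(x,y_{0}) = 0$; and conversely $c(x,y_{0}) = 0$ together with $c \geq 0$ forces $y_{0}$ to be a global minimizer of $c(x,\cdot)$, so the three conditions in (\ref{equiva}) are each equivalent to $c(x,y) = 0$, giving condition (c). I do not expect a genuine obstacle; the only subtlety is bookkeeping: one must keep in mind that the sync axiom concerns the minimum \emph{existing} (being attained), which is exactly what a subgradient membership encodes, and one must use $c \geq 0$ to move freely between ``$c(x,y_{0}) = 0$'' and ``$y_{0}$ minimizes $c(x,\cdot)$''.
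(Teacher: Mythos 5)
Your proof is correct; the paper states Proposition \ref{psync} without proof, and your argument is exactly the intended one: subtracting the continuous linear functional $\langle x,\cdot\rangle$ (resp.\ $\langle \cdot,y\rangle$) preserves domains, convexity and lower semicontinuity, so condition (a) transfers; condition (b) of Definition \ref{def2} is precisely the nonnegativity of $c$; and the three clauses of (\ref{equiva}) translate into ``$x$ minimizes $c(\cdot,y)$'', ``$y$ minimizes $c(x,\cdot)$'' and ``$c(x,y)=0$'', which are mutually equivalent exactly when every attained minimum vanishes. The only point worth making explicit is the standard convention $\partial b(x,\cdot)(y)=\emptyset$ whenever $b(x,y)=+\infty$, which legitimizes rearranging the subgradient inequality into $c(x,w)\geq c(x,y)$; the paper relies on the same convention implicitly.
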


\begin{remark}
The string of equivalences (\ref{equiva}) justifies the name 
"syncronised convex function", as it expresses the fact that critical 
points of functions $c(x, \cdot)$ are related with critical points of 
functions $c(\cdot, y)$. 
\end{remark}

With the notations from proposition \ref{psync}, we have  
$\displaystyle M(b) = c^{-1}(0)$. Also, for any $x \in X$ and $y \in Y$,
property (a) definition \ref{defsync} of syncs is equivalent with: 
$$epi(c) \cap \left\{x\right\} \times Y \times \mathbb{R} \mbox{ and } 
epi(c) \cap X \times \left\{y\right\}  \times \mathbb{R}$$ 
are closed convex sets, where $epi(c)$ is the epigraph of $c$: 
$$epi(c) = \left\{ (x,y,r) \in X \times Y \times \mathbb{R} \mbox{ : } 
c(x,y) \leq r \right\}$$

An interesting fact is that duality products do not enter in the definition of syncs. As an application,  let $(X,Y)$ be a pair of spaces, $\langle \cdot , \cdot \rangle$ and $\langle \cdot , \cdot \rangle'$ be two duality products, defined on $X \times Y$, and $c: X \times Y \rightarrow 
[0, +\infty]$ be a sync. We define  the applications: 
$$b \, , \, b' : X \times Y \rightarrow \mathbb{R} \cup \left\{ + \infty\right\} \quad 
b(x,y) = c(x,y) + \langle x,y \rangle \, , \, b'(x,y) = c(x,y) + \langle x,y \rangle'$$
Then $b$ is a bipotential with respect to $\langle \cdot , \cdot \rangle$ and $b'$ is a bipotential 
with respect to $\langle \cdot , \cdot \rangle'$. As a corollary, if we have a bipotential $b$ with respect to the duality product $\langle \cdot , \cdot \rangle$ and $\langle \cdot , \cdot \rangle'$ 
is another duality product, then the application $b'$ defined by 
$$b'(x,y) = b(x,y) - \langle x,y \rangle \ + \langle x,y \rangle'$$
is a bipotential with respect to the duality product $\langle \cdot , \cdot \rangle'$ and 
$M(b) = M(b')$ (they describe the same law). More generally, we have the following proposition concerning transformations of syncs.

\begin{proposition}
Let $(X,Y,\langle \cdot , \cdot \rangle)$, $(X',Y',\langle \cdot , \cdot \rangle')$ be two pairs 
of spaces with their respective duality products, $T: X \rightarrow X'$ and $L: Y \rightarrow Y'$
be two linear, bijective, continuous transformations, $\alpha > 0$  and $c': X' \times Y' \rightarrow [0, +\infty]$ be a sync. 
Then the function 
$$c: X \times Y \rightarrow [0, +\infty] \quad , \quad c(x,y) \, = \, \alpha \, c'(T x , L y)$$
is a sync and $\displaystyle c^{-1}(0) = c'^{-1}(0)$. 
\label{pchangebipo}
\end{proposition}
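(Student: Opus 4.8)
The plan is to verify directly the two defining properties of a sync (Definition \ref{defsync}) for the function $c(x,y) = \alpha\, c'(Tx, Ly)$, exploiting that $(T,L)$ is a pair of linear, continuous bijections and that $\alpha > 0$. As a preliminary observation, $c$ takes values in $[0,+\infty]$ because $c' \geq 0$ and $\alpha > 0$, so the target space is correct.

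First I would check property (a). Fix $x \in X$ and put $x' = Tx \in X'$, so that $c(x, \cdot) = \alpha\, c'(x', L\,\cdot)$. Since $L$ is a bijection, $dom\, c(x,\cdot) = L^{-1}(dom\, c'(x', \cdot))$, hence one is nonempty iff the other is. Assuming it is, property (a) for $c'$ gives $c'(x',\cdot) \in \Gamma_{0}(Y')$; then $y \mapsto c'(x', Ly)$ is convex, being the composition of a convex function with the linear map $L$, and lower semicontinuous, being the composition of a lsc function with the continuous map $L$ (the sublevel set $\{\, c'(x', L\,\cdot) \leq r\,\} = L^{-1}(\{\, c'(x',\cdot) \leq r\,\})$ is closed). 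Multiplying by $\alpha > 0$ preserves convexity, lower semicontinuity and nonemptiness of the domain, so $c(x,\cdot) \in \Gamma_{0}(Y)$. The statement for $c(\cdot, y)$ is entirely symmetric, with $T$ playing the role of $L$.

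Next, property (b). Fix $x$ with $dom\, c(x,\cdot) \neq \emptyset$ and suppose the minimum $\min\{\, c(x,y) : y \in Y \,\}$ exists. Because $L$ maps $Y$ bijectively onto $Y'$, we have the set equality $\{\, c(x,y) : y \in Y \,\} = \{\, \alpha\, c'(Tx, y') : y' \in Y' \,\}$, so $\min\{\, c'(Tx, y') : y' \in Y' \,\}$ exists too and the two minima differ exactly by the factor $\alpha$. Since $dom\, c'(Tx,\cdot) \neq \emptyset$, property (b) for $c'$ shows this last minimum equals $0$, whence $\min\{\, c(x,y) : y \in Y \,\} = \alpha \cdot 0 = 0$; the other half is again symmetric. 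This establishes that $c$ is a sync. Finally, since $\alpha > 0$ and $c' \geq 0$, we have $c(x,y) = 0 \iff c'(Tx, Ly) = 0$, i.e.\ $c^{-1}(0) = (T \times L)^{-1}(c'^{-1}(0))$; identifying $X \times Y$ with $X' \times Y'$ through the bijection $T \times L$, this is the asserted equality $c^{-1}(0) = c'^{-1}(0)$.

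I do not anticipate a genuine obstacle: the argument is a bookkeeping exercise in transporting convexity, lower semicontinuity, domains and minima along $(T,L)$. The only points demanding care are (i) invoking properties (a) and (b) of $c'$ only under their "$dom \neq \emptyset$" hypotheses, which is why the equality $dom\, c(x,\cdot) = L^{-1}(dom\, c'(Tx,\cdot))$ must be recorded first, and (ii) noticing that mere continuity of $T$ and $L$ suffices to carry over lower semicontinuity, while their bijectivity is what is actually needed to carry over domains and the existence of minima.
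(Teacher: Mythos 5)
Your proof is correct and follows essentially the same route as the paper's (much terser) argument: transport convexity and lower semicontinuity through the linear continuous maps $T,L$ and the positive scalar $\alpha$ for condition (a), and use bijectivity to transport domains and minima for condition (b). You additionally spell out the zero-set identity, correctly noting that the stated equality $c^{-1}(0)=c'^{-1}(0)$ is really an identification via $T\times L$, a point the paper leaves implicit.
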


\begin{proof} The application $c'$ is a sync, therefore it satisfies conditions (a), (b) from definition \ref{defsync}. It is straightforward that $c$ is convex and lsc in each argument, therefore condition (a) definition \ref{defsync} is a consequence of the same condition for $c'$. Also, because 
$T$ and $L$ are bijective,  condition (b) for the application $c$ follows from the same condition 
for $c'$. \end{proof}

 The following is definition 3.1 \cite{bipo1}.

\begin{definition}
A non empty set $M \subset X \times Y$ is a BB-graph (bi-convex and 
 bi-closed) if for any $x \in X$ and $y \in Y$ the sections 
$$\displaystyle M (x) \ = \ \left\{ y \in Y \mid (x,y) \in M \right\} \ $$  
$$ M^{*}(y) \ = \ \left\{ x \in X \mid (x,y) \in M \right\} \ $$
 are convex and closed. 
\end{definition}

For any  BB-graph $M$ the indicator function $\displaystyle \Psi_{M}$ is 
obviously a sync. 
 To this sync corresponds the bipotential  
 $$ b_{\infty} (x,y) = \left\langle x,y \right\rangle  + \Psi_M (x,y) .$$ 
In particular, this shows that to a BB-graph we may associate more than one 
bipotential. Indeed, if $M$ is maximal cyclically monotone then it is 
the graph of a separable bipotential, but also the graph of the bipotential 
associated to the sync $\displaystyle \Psi_{M}$ (that is a bipotential of the 
form $\displaystyle b_{\infty}$). Therefore maximal cyclically monotone 
graphs admit at least two distinct bipotentials.

\section{Implicit standard materials described by  bipotentials}

In the mechanics of standard materials, the evolution problem is generally 
given by a set of equations, inequations,  boundary and initial 
conditions. They can be structured in three groups: kinematical equations, equilibrium equations and  
the constitutive law modeling the material behavior.

\subsection{Notations.} The configuration of the body is represented by
$\Omega$, an open, bounded set 
with piecewise smooth boundary $\partial \Omega$. 

We denote by $n$ the dimension of the configuration space ($n=1,2$ or $3$), 
thus $\displaystyle \Omega \subset \mathbb{R}^{n}$. 

The boundary decomposes in 
two disjoint parts: on $\displaystyle \partial_{0} \Omega$ displacements are 
imposed, while given surface forces act on the remaining part of the boundary 
denoted by $\displaystyle \partial_{1} \Omega$. 
The closure of $\Omega$ is denoted by $\bar{\Omega}$. 

The following quantities are considered. 
\begin{enumerate}
\item[-] $u$ is the displacement of the body with respect to the 
configuration  $\Omega$,
\item[-]  $\displaystyle \varepsilon = D(u) = \frac{1}{2} \left( \nabla u + \nabla u^{T} \right)$ is the associated strain. The trace of the strain is denoted by 
$\displaystyle e_{m} = \,\frac{1}{n} \, tr \, \varepsilon$ and the strain deviator is 
$$e = \varepsilon -  e_{m} I_{n} $$
\item[-] The strain $\varepsilon$ decomposes additively into elastic and plastic strains 
$$ \varepsilon = \varepsilon^{e} + \varepsilon^{p}$$ 
The traces of elastic strain $\displaystyle \varepsilon^{e}$ and plastic strain $\displaystyle \varepsilon^{p}$  are  denoted respectively by $\displaystyle e^{e}_{m}$, $\displaystyle e^{p}_{m}$,  and their deviatoric parts are 
$\displaystyle e^{e}$, $\displaystyle e^{p}$ respectively, 
\item[-] The stress field is denoted by $\sigma$, its trace is the hydrostatic 
pressure $\displaystyle s_{m} = \, tr \, \sigma$ and $s$ denotes the stress 
deviator. 
\item[-] $S$ is the elasticity tensor modulus.
\item[-] The density of volumic forces is $\displaystyle f_{v}$; on $\displaystyle \partial_{1} \Omega$ act the surface forces with density 
$\displaystyle f_{s}$. The class of stress fields $\sigma$ which satisfy the
equilibrium equations: 
$$\displaystyle div \, \sigma + f_{v} = 0 \mbox{  in } \Omega \quad , \quad 
 \sigma \cdot n = f_{s} \mbox{  on } \partial_{1}\Omega$$
  is denoted by $\displaystyle SA(f_{v}, f_{s})$. 
\item[-] The imposed boundary displacements on $\displaystyle \partial_{0} \Omega$ are denoted by $\displaystyle \bar{u}$. In fact, it is 
useful for further computations to consider the imposed boundary  displacement $\displaystyle \bar{u}$ to be defined over 
all $\bar{\Omega}$. The class of displacements $\displaystyle u$, such that 
$\displaystyle u - \bar{u} = 0$ on $\partial \Omega$ (possibly in the sense of
trace) is denoted by $\displaystyle CA(\bar{u})$ and called the class of
displacements which are kinematically admissible with respect to $\displaystyle
\bar{u}$.
\end{enumerate}

Let $\displaystyle Sym(n)$ be the space of $n \times n$ real symmetric matrices  and $\displaystyle Sym_{0}(n) \subset Sym(n)$ the subspace of real symmetric matrices with null trace. 
The decomposition of a real symmetric matrix into 
hydrostatic and deviatoric parts can be expressed by the linear bijective 
transformations: 
$$ T_{1}: Sym(n) \rightarrow \mathbb{R} \times Sym_{0}(n) \quad , \quad
T_{1}(\varepsilon) = \left( \frac{1}{n} \, tr \, \varepsilon , \varepsilon - \frac{1}{n} (tr \,
 \varepsilon ) I_{n} \right)$$
 $$ T_{2}: Sym(n) \rightarrow \mathbb{R} \times Sym_{0}(n) \quad , \quad
T_{2}(\sigma) = \left( tr \, \sigma , \sigma - \frac{1}{n} (tr \,
 \sigma ) I_{n} \right)$$
With the notations previously made, for any strain value $\varepsilon  \in Sym(n)$, or for any 
stress value $\sigma  \in Sym(n)$, the decompositions in hydrostatic and deviatoric parts are: 
$$T_{1}(\varepsilon) = (e_{m}, e) \quad , \quad T_{2}(\sigma) = (s_{m}, s)$$
(In order to keep track of physical dimensions, we should introduce two spaces 
$Sym(n)$, one for strains and the other for stresses, or introduce units of 
measure, but we feel that such notations are  only making the presentation 
unnecessary complicated.)
 
We shall consider the following duality products: 
$$\langle \cdot , \cdot \rangle : Sym(n) \times Sym(n) \rightarrow \mathbb{R} \quad , \quad \langle \varepsilon , \sigma \rangle = \, tr \, (\varepsilon  \sigma)$$
 $$\langle \cdot , \cdot \rangle' : \left( \mathbb{R} \times Sym_{0}(n)\right) \times \left( \mathbb{R} \times Sym_{0}(n)\right) \rightarrow \mathbb{R} \quad , \quad \langle (e_{m}, e) , (s_{m}, s) \rangle' = e_{m} s_{m} + \langle e , s \rangle $$
Remark that the first duality product is the one entering in the formulation 
of the dissipation (as an integral over the body configuration $\Omega$ of $\displaystyle 
\langle \dot{\varepsilon^{p}} , \sigma$. The second duality product will be 
used for the plastic bipotential, see later for the example of  the 
Berga \& de Saxc\'e bipotential for the non-associative Dr\"ucker-Prager 
law. The relation between these dualities is: 
$$\langle \varepsilon, \sigma \rangle = \langle T_{1} (\varepsilon) ,
T_{2}(\sigma) \rangle'$$
therefore (by passing to associated syncs and back) we can easily transform
bipotentials expressed in coordinates $(\varepsilon, \sigma)$ into bipotentials
expressed in coordinates $\displaystyle ((e_{m},e),(s_{m},s))$.

The kinematical equations are: 
\begin{equation}
\varepsilon =  \frac{1}{2} \left( \nabla u + \nabla u^{T} \right) \quad , \quad
u \in CA(\bar{u})
\end{equation}

The equilibrium equations are: 
\begin{equation}
\sigma \in SA(f_{v}, f_{s})
\end{equation}

The constitutive equations (besides the additive decomposition of the strain
into elastic and plastic parts) are expressed with two bipotentials: the elastic
and the plastic bipotential respectively.

The elastic bipotential is defined by the elasticity tensor modulus and it has
the form:
\begin{equation}
b_{e}(\varepsilon^{e}, \sigma) = \frac{1}{2} \langle \varepsilon^{e} , S \varepsilon^{e}\rangle + \frac{1}{2} \langle S^{-1} \sigma , \sigma \rangle
\label{bipoel}
\end{equation}
The elastic bipotential is defined over pairs of dual variables (elastic strain,
stress). It is a separable bipotential, expresses as the sum of the (density of)
the elastic energy and it's dual. Moreover, this bipotential is quadratic in
each variable. 

The plastic bipotential 
\begin{equation}
b_{p} = b_{p}(\dot{\varepsilon}^{p}, \sigma)
\label{bipop}
\end{equation}
is defined over another pair of dual variables, namely
(plastic strain rate, stress). In the case of standard materials, the plastic bipotential is separated
(expressed as the sum of the plastic potential and it's dual). For implicit
standard constitutive laws which can be expressed by a bipotential (like for
example the non-associative Dr\"ucker-Prager law), the bipotential is not
separated. 

The constitutive equations are: 
\begin{equation}
\varepsilon = \varepsilon^{e} + \varepsilon^{p}
\label{decomp}
\end{equation}

\begin{equation}
\varepsilon^{e} \in \partial b_{e} (\varepsilon^{e}, \cdot) (\sigma)
\label{clawe}
\end{equation}

\begin{equation}
\dot{\varepsilon}^{p} \in \partial b_{p} (\dot{\varepsilon}^{p}, \cdot) (\sigma)
\label{clawp}
\end{equation}

The constitutive equation (\ref{clawe}) is equivalent with $\displaystyle
\varepsilon^{e} = S^{-1} \sigma$, which is a linear equation. 
In order to enhance the resemblance between  (\ref{clawe}) and (\ref{clawp}), 
we could differentiate with respect to time in the constitutive equation for 
$\displaystyle \varepsilon^{e}$ and then express the result with the help of the
elastic bipotential: 

\begin{equation}
\dot{\varepsilon}^{e} \in \partial b_{e} (\dot{\varepsilon}^{e}, \cdot) 
(\dot{\sigma})
\label{clawee}
\end{equation}

\section{Non-associated Dr\"ucker-Prager elasto-plasticity}

An important example of an implicit standard material is provided by 
the non-associated Dr\"ucker-Prager constitutive law. Here we follow the
presentation from \cite{bergadesaxce}. 

\subsection{Plastically admissible stresses.} The model is characterized by a 
Dr\"ucker-Prager plastic yielding surface. The set of plastically admissible stresses is the following cone: 
$$K_{stress} = \left\{ \sigma = \frac{1}{3} s_{m} I \, + \, s \mbox{ such that } 
\frac{1}{k_{d}} \| s \| + s_{m} \, tg \, \phi \leq c \right\}$$
Here $c$  is the cohesion, $\phi$ is the friction angle and $\displaystyle
k_{d}$ is a constant whose significance is explained in \cite{bergadesaxce}
section 3, relations (3.1), (3.2). 

We denote by $\displaystyle K_{stress}' = T(K_{stress})$ the same cone in
coordinates $(s_{m}, s)$ of the stresses. 

\subsection{Plastically admissible strain rates.} Let $\theta \in [0, \phi ]$ be the dilatancy angle 
(if $\theta = \phi$ then we are in the case of associated Dr\"ucker-Prager elastoplasticity). The set of admissible plastic strain rates is the cone: 
$$K_{strain} = \left\{ \dot{\varepsilon}^{p} = \frac{1}{3} \dot{e}^{p}_{m} I \, + \,  \dot{e}^{p} \mbox{ such that } 
k_{d} \, tg \, \theta \,  \| \dot{e}^{p} \|  \leq \dot{e}^{p}_{m} \right\}$$
We denote by $\displaystyle K_{strain}' = T(K_{strain})$ the same cone in the representation 
$(e_{m}, e)$ of the strains.  
\subsection{The flow rule.} The constitutive equation for the evolution of the plastic strain  has the following expression:
\begin{equation}
\left( \left( \dot{e}^{p}_{m} + k_{d} ( tg \, \phi \, - \, tg \, \theta) \|\dot{e}^{p}\| \right) ,  
\dot{e}^{p}\right) \, \in \, \partial    \Psi_{K_{stress}'} (s_{m},s)
\label{ef1}
\end{equation}

Theorems 4.1, 4.2 from \cite{bergadesaxce}  are collected into the following. 

\begin{theorem}
Let $\displaystyle b_{p}': \left( \mathbb{R} \times Sym_{0}(n)\right) \times \left( \mathbb{R} \times Sym_{0}(n)\right) \rightarrow \mathbb{R}\cup \left\{ + \infty \right\}$ be the function: 
\begin{equation}
 b_{p}'((e_{m}, e), (s_{m}, s)) =  \left\{ \begin{array}{ll} 
C_{1} e_{m} + C_{2} ( s_{m} - \frac{c}{tg \, \phi} ) 
\| e \| & \mbox{ if } (s_{m}, s) \in K_{stress}' \mbox{ and} \\
 &  (e_{m}, e) \in K_{strain}' \\
+ \infty & \mbox{ otherwise}
\end{array} \right.
\label{bepe}
\end{equation}
where $\displaystyle \| e\|$ is the norm defined by $\displaystyle \| e \|^{2} = \langle e , e \rangle$ and   the constants  $$ \displaystyle C_{1} = \frac{c}{tg \,  \phi} \quad , \quad C_{2} = k_{d} ( tg \, \theta - tg \, \phi )$$ are coming from the flow rule (\ref{ef1}).  Then: 
\begin{enumerate}
\item[(a)] $\displaystyle b_{p}'$ is a bipotential with respect to the duality 
product $\displaystyle \langle \cdot , \cdot \rangle'$, 
\item[(b)] the non-associated Dr\"ucker-Prager  constitutive equation for the 
evolution of the plastic strain (\ref{ef1}) can be expressed with the help of 
the bipotential $\displaystyle b_{p}'$ as 
$$ b_{p}' ( (\dot{e}^{p}_{m}, \dot{e}^{p}) , ( s_{m} , s)) = \langle (\dot{e}^{p}_{m}, \dot{e}^{p}) , (s_{m}, s) \rangle'$$
\end{enumerate}
\end{theorem}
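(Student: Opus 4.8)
The plan is to establish (a) by showing, via Proposition \ref{psync}, that $c_{p}' := b_{p}' - \langle\cdot,\cdot\rangle'$ is a sync, and then to read off (b) from the equivalence (\ref{equiva}) of Definition \ref{def2} once it is available for $b_{p}'$. Condition (a) of Definition \ref{defsync} is the routine part: off $K_{stress}'\times K_{strain}'$ the relevant sections of $c_{p}'$ are identically $+\infty$, while for fixed $(s_{m},s)\in K_{stress}'$ the map $(e_{m},e)\mapsto b_{p}'((e_{m},e),(s_{m},s))$ is the sum of an affine function of $(e_{m},e)$, the multiple $C_{2}(s_{m}-\frac{c}{tg\,\phi})\|e\|$ of the convex function $\|e\|$, and the indicator of the closed convex cone $K_{strain}'$; the coefficient $C_{2}(s_{m}-\frac{c}{tg\,\phi})$ is nonnegative because $\theta\in[0,\phi]$ forces $C_{2}\leq 0$ and the inequality defining $K_{stress}'$ forces $s_{m}\leq\frac{c}{tg\,\phi}$, so this map is convex and lsc. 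Symmetrically, for fixed $(e_{m},e)\in K_{strain}'$ the map $b_{p}'((e_{m},e),\cdot)$ is affine in $(s_{m},s)$ plus $\Psi_{K_{stress}'}$, hence convex and lsc. Thus $c_{p}'$ is convex and lsc in each argument.

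Next I would check that $c_{p}'$ takes values in $[0,+\infty]$. Expanding and using $C_{1}=\frac{c}{tg\,\phi}$, $C_{2}=k_{d}(tg\,\theta-tg\,\phi)$, one gets on $K_{stress}'\times K_{strain}'$ the identity
\begin{equation}
c_{p}'((e_{m},e),(s_{m},s)) \, = \, \left(\frac{c}{tg\,\phi}-s_{m}\right)\left(e_{m}+k_{d}(tg\,\phi-tg\,\theta)\|e\|\right)\, -\, \langle e,s\rangle \, .
\label{plan-id}
\end{equation}
Bounding $\langle e,s\rangle\leq\|e\|\,\|s\|$ by Cauchy--Schwarz and then $\|s\|\leq k_{d}\,tg\,\phi\,(\frac{c}{tg\,\phi}-s_{m})$ by the definition of $K_{stress}'$, the right-hand side of (\ref{plan-id}) is $\geq(\frac{c}{tg\,\phi}-s_{m})(e_{m}-k_{d}\,tg\,\theta\,\|e\|)\geq 0$, both factors being nonnegative on $K_{stress}'\times K_{strain}'$. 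Condition (b) of Definition \ref{defsync} is then immediate from (\ref{plan-id}): since $(0,0)\in K_{strain}'$ and the apex $(\frac{c}{tg\,\phi},0)$ lies in $K_{stress}'$, one has $c_{p}'((0,0),(s_{m},s))=0$ for all $(s_{m},s)\in K_{stress}'$ and $c_{p}'((e_{m},e),(\frac{c}{tg\,\phi},0))=0$ for all $(e_{m},e)\in K_{strain}'$, so the partial minima of $c_{p}'$, when they exist, equal $0$. Hence $c_{p}'$ is a sync and, by Proposition \ref{psync}, $b_{p}'$ is a bipotential with respect to $\langle\cdot,\cdot\rangle'$, which proves (a).

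For (b), I would use that the equivalence (\ref{equiva}), now valid for $b_{p}'$, rephrases the equality $b_{p}'((\dot{e}^{p}_{m},\dot{e}^{p}),(s_{m},s))=\langle(\dot{e}^{p}_{m},\dot{e}^{p}),(s_{m},s)\rangle'$ as the membership $(\dot{e}^{p}_{m},\dot{e}^{p})\in\partial b_{p}'((\dot{e}^{p}_{m},\dot{e}^{p}),\cdot)(s_{m},s)$. For $(\dot{e}^{p}_{m},\dot{e}^{p})\in K_{strain}'$ the function $b_{p}'((\dot{e}^{p}_{m},\dot{e}^{p}),\cdot)$ is the affine map $(s_{m},s)\mapsto C_{1}\dot{e}^{p}_{m}+C_{2}\|\dot{e}^{p}\|(s_{m}-\frac{c}{tg\,\phi})$ plus $\Psi_{K_{stress}'}$, and a one-line computation from the definition of the subgradient gives
$$\partial b_{p}'((\dot{e}^{p}_{m},\dot{e}^{p}),\cdot)(s_{m},s)\, =\, \big(C_{2}\|\dot{e}^{p}\|,\,0\big)\, +\, \partial\Psi_{K_{stress}'}(s_{m},s)\, .$$
Thus the equality is equivalent to $\big(\dot{e}^{p}_{m}-C_{2}\|\dot{e}^{p}\|,\,\dot{e}^{p}\big)\in\partial\Psi_{K_{stress}'}(s_{m},s)$, and since $\dot{e}^{p}_{m}-C_{2}\|\dot{e}^{p}\|=\dot{e}^{p}_{m}+k_{d}(tg\,\phi-tg\,\theta)\|\dot{e}^{p}\|$, this is exactly the flow rule (\ref{ef1}). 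It remains only to do the boundary bookkeeping, i.e. to check that each of the equality, the subgradient membership and the flow rule forces $(\dot{e}^{p}_{m},\dot{e}^{p})\in K_{strain}'$ and $(s_{m},s)\in K_{stress}'$ so that the comparison is made on a common domain; for the flow rule this reduces to the fact that $\bigcup_{(s_{m},s)\in K_{stress}'}\partial\Psi_{K_{stress}'}(s_{m},s)\subseteq\{(\xi_{m},\xi):k_{d}\,tg\,\phi\,\|\xi\|\leq\xi_{m}\}$, which follows from the description of the normal cone of $K_{stress}'$ (in particular at its apex).

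I expect the only genuinely non-mechanical step to be the verification that $c_{p}'\geq 0$: the grouping in (\ref{plan-id}) and the order of the two estimates (Cauchy--Schwarz first, then the cone bound on $\|s\|$) are what make the lower bound telescope to a visibly nonnegative product, and this is precisely where the hypothesis $\theta\in[0,\phi]$ and the geometry of the cones $K_{stress}'$ and $K_{strain}'$ are used. Should one instead prefer to prove (b) by exhibiting $M(b_{p}')$ explicitly --- the apex of $K_{stress}'$ paired with an arbitrary element of $K_{strain}'$, the zero strain rate paired with an arbitrary element of $K_{stress}'$, and smooth boundary points of both cones whose deviators are positively proportional --- and comparing with (\ref{ef1}) through the normal cone of $K_{stress}'$, then the delicate computation becomes that of the normal cone at the apex, which equals $\{(\xi_{m},\xi):k_{d}\,tg\,\phi\,\|\xi\|\leq\xi_{m}\}$.
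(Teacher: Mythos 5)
Your argument is correct, and I can find no gap in it. Note that the paper itself gives no proof of this statement: it is imported verbatim from Theorems 4.1 and 4.2 of \cite{bergadesaxce}, so there is no ``paper proof'' to compare against line by line. What you supply is a self-contained verification, and it is organized exactly in the spirit of the machinery the paper does develop: you reduce (a) to showing that $c_{p}'=b_{p}'-\langle\cdot,\cdot\rangle'$ is a sync and invoke Proposition \ref{psync}, rather than checking the three axioms of Definition \ref{def2} directly. The key computation --- the factorization
\begin{equation*}
c_{p}'((e_{m},e),(s_{m},s))=\Bigl(\tfrac{c}{tg\,\phi}-s_{m}\Bigr)\Bigl(e_{m}+k_{d}(tg\,\phi-tg\,\theta)\|e\|\Bigr)-\langle e,s\rangle
\end{equation*}
followed by Cauchy--Schwarz and the cone bound $\|s\|\leq k_{d}\,tg\,\phi\,(\tfrac{c}{tg\,\phi}-s_{m})$ --- is verified correctly, and you correctly identify where $\theta\in[0,\phi]$ enters (both in $C_{2}\leq 0$, which makes the coefficient of $\|e\|$ nonnegative and hence the first section convex, and in the final telescoping to $(\tfrac{c}{tg\,\phi}-s_{m})(e_{m}-k_{d}\,tg\,\theta\,\|e\|)\geq 0$). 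Your treatment of (b) via $\partial(f+\Psi_{K_{stress}'})=\nabla f+\partial\Psi_{K_{stress}'}$ for $f$ affine, together with the observation that the flow rule itself forces $(\dot{e}^{p}_{m},\dot{e}^{p})$ into $K_{strain}'$ because the union of normal cones to $K_{stress}'$ is its polar-type cone $\{(\xi_{m},\xi):k_{d}\,tg\,\phi\,\|\xi\|\leq\xi_{m}\}$, is exactly the bookkeeping needed to make the equivalence with (\ref{ef1}) honest; this last point is easy to overlook and you handle it. The only cosmetic remark is that in checking condition (a) of Definition \ref{defsync} you argue about the sections of $b_{p}'$ and conclude for $c_{p}'$; since the two differ by a function linear in each argument separately, this is harmless, but it is worth saying explicitly.
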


As an application of proposition \ref{pchangebipo}, we obtain the following
characterization of the Dr\"ucker-Prager  constitutive law. 

\begin{corollary}
In the coordinates $\displaystyle (\varepsilon, \sigma)\in Sym(n) \times Sym(n)$, with the duality 
product $\displaystyle \langle \varepsilon , \sigma \rangle = tr \, (\varepsilon  \sigma)$, 
the non-associated Dr\"ucker-Prager  constitutive law (\ref{ef1}) can be expressed with the help 
of the bipotential 
$\displaystyle b_{p}: Sym(n) \times Sym(n) \rightarrow \mathbb{R}\cup \left\{ + \infty \right\}$ 
defined by: 
\begin{equation}
b_{p}(\varepsilon, \sigma) = \Psi_{K_{stress}} (\sigma) + \Psi_{K_{strain}} (\varepsilon) + 
C_{1} \, tr \, \varepsilon \,  + C_{2} ( tr \, \sigma \, - \frac{c}{tg \, \phi} ) 
\| \varepsilon - \frac{1}{n}(tr \, \varepsilon) I  \| \, - 
\label{bepetrue}
\end{equation}
$$ - \, \left( 1 - \frac{1}{n}\right)(tr \, \varepsilon ) (tr \, \sigma)$$
\end{corollary}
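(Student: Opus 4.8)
The plan is to obtain $b_{p}$ from the preceding Theorem by the procedure ``pass to the associated sync, change coordinates, pass back'', which is precisely the recipe described just before Proposition \ref{pchangebipo}. By part (a) of the Theorem, $b_{p}'$ is a bipotential with respect to $\langle \cdot , \cdot \rangle'$, so by Proposition \ref{psync} the function
$$c'((e_{m},e),(s_{m},s)) \, = \, b_{p}'((e_{m},e),(s_{m},s)) \, - \, \langle (e_{m},e),(s_{m},s) \rangle'$$
is a sync on $\left( \mathbb{R} \times Sym_{0}(n)\right) \times \left( \mathbb{R} \times Sym_{0}(n)\right)$, with values in $[0,+\infty]$, and $c'^{-1}(0) = M(b_{p}')$.

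Let $T : Sym(n) \rightarrow \mathbb{R} \times Sym_{0}(n)$ be the decomposition map $T x = ( tr \, x \, , \, x - \frac{1}{n} ( tr \, x ) I_{n} )$; it is linear, bijective and continuous, and it realizes $K_{strain}' = T(K_{strain})$ and $K_{stress}' = T(K_{stress})$. Applying Proposition \ref{pchangebipo} with $\alpha = 1$ and with this $T$ in each of the two arguments, the function $c(\varepsilon,\sigma) = c'(T\varepsilon , T\sigma)$ is a sync on $Sym(n) \times Sym(n)$, and since $T$ is bijective the conclusion $c^{-1}(0) = c'^{-1}(0)$ of that proposition reads here $c^{-1}(0) = (T \times T)^{-1}\big( M(b_{p}') \big)$. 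Taking now $\langle \varepsilon , \sigma \rangle = tr \, ( \varepsilon \sigma )$ as duality product and using Proposition \ref{psync} in the reverse direction, $b_{p}(\varepsilon,\sigma) := c(\varepsilon,\sigma) + tr \, (\varepsilon \sigma)$ is a bipotential with respect to $\langle \cdot , \cdot \rangle$, with $M(b_{p}) = c^{-1}(0)$. This already settles the structural content of the statement: a bipotential in the coordinates $(\varepsilon,\sigma)$, for the duality product $tr \, (\varepsilon\sigma)$, describing the same law, exists and has graph $(T \times T)^{-1}(M(b_{p}'))$.

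It remains to pin down the closed form and to identify the graph. For the closed form I would write $b_{p}(\varepsilon,\sigma) = b_{p}'(T\varepsilon , T\sigma) - \langle T\varepsilon , T\sigma \rangle' + tr \, (\varepsilon\sigma)$, substitute $e_{m} = tr \, \varepsilon$, $e = \varepsilon - \frac{1}{n}(tr \, \varepsilon) I_{n}$, $s_{m} = tr \, \sigma$, $s = \sigma - \frac{1}{n}(tr \, \sigma) I_{n}$, use formula (\ref{bepe}) for $b_{p}'$ together with $\langle (e_{m},e),(s_{m},s) \rangle' = e_{m} s_{m} + tr \, (e s)$, and expand $tr \, (e s)$ with the help of $tr \, I_{n} = n$. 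The indicator part of (\ref{bepe}) turns into $\Psi_{K_{strain}}(\varepsilon) + \Psi_{K_{stress}}(\sigma)$ (because $T$ carries each cone onto its primed version), the term $C_{1} e_{m}$ becomes $C_{1} \, tr \, \varepsilon$, the term $C_{2}( s_{m} - \frac{c}{tg \, \phi} ) \| e \|$ becomes $C_{2}( tr \, \sigma - \frac{c}{tg \, \phi} ) \| \varepsilon - \frac{1}{n}(tr \, \varepsilon) I_{n} \|$, and the leftover contributions $- e_{m} s_{m} - tr \, (e s) + tr \, (\varepsilon\sigma)$ collapse exactly to the quadratic cross term $- \left(1 - \frac{1}{n}\right)(tr \, \varepsilon)(tr \, \sigma)$ of (\ref{bepetrue}). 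For the graph, $M(b_{p}) = (T \times T)^{-1}(M(b_{p}'))$, and by part (b) of the Theorem $M(b_{p}')$ is precisely the set of pairs satisfying the flow rule (\ref{ef1}); since $T$ applied in each argument is just the passage to the hydrostatic and deviatoric variables in which (\ref{ef1}) is written, pulling back by $T \times T$ returns (\ref{ef1}) in the variables $(\varepsilon,\sigma)$, so $M(b_{p})$ is the graph of the non-associated Dr\"ucker-Prager law.

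Everything structural is an immediate double use of Propositions \ref{psync} and \ref{pchangebipo}. The one place that needs care — and the likely source of a sign or normalization slip — is the explicit expansion in the third step: the two duality products $\langle \cdot , \cdot \rangle$ and $\langle \cdot , \cdot \rangle'$, the constants $C_{1}, C_{2}$ coming from the flow rule, and the factor $tr \, I_{n} = n$ must all be carried along simultaneously so that the result matches (\ref{bepetrue}) exactly, including the precise coefficient $1 - \frac{1}{n}$ in front of the cross term.
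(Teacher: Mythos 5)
Your proof is correct and follows exactly the route the paper intends: the corollary is presented there as an immediate application of Proposition \ref{pchangebipo} with no written proof, and your pass-to-sync / change-of-coordinates / pass-back argument, including the bookkeeping of the two duality products that produces the cross term $-\left(1-\frac{1}{n}\right)(tr\,\varepsilon)(tr\,\sigma)$, is the intended way to fill it in. The one subtlety, which you handle correctly, is that the full-trace decomposition $Tx=\left(tr\,x,\ x-\frac{1}{n}(tr\,x)I_{n}\right)$ must be used in \emph{both} arguments (consistently with the cones of the Dr\"ucker--Prager section and with the stated formula), rather than the asymmetric pair $T_{1},T_{2}$ of the notation section, for which $\langle T_{1}\varepsilon, T_{2}\sigma\rangle'=\langle\varepsilon,\sigma\rangle$ and no cross term would appear.
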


\begin{remark}
The term 
containing $\displaystyle C_{2}$ represents a coupling between the hydrostatic 
part of the stress and the deviatoric part of the strain (rate). 
If $\displaystyle C_{2} = 0$ then we get the associated Dr\"ucker-Prager  
constitutive law.  In this case the last term from the right hand side of the 
expression (\ref{bepetrue}) can be eliminated by modifying the cones 
$\displaystyle K_{stress}$ and $\displaystyle K_{strain}$. But if 
$\displaystyle C_{2} \not = 0$ such a modification cannot be made because of 
the coupling between deviatoric and hydrostatic parts, 
so  this last term in the expression of $\displaystyle b_{p}$ can not 
disappear by a modification of  the cones  $\displaystyle 
K_{stress}$ and $\displaystyle K_{strain}$. 
\end{remark}

\section{Time discretisation of the evolution problem}

Given as the initial data the displacement $\displaystyle u_{0}$ and the initial
plastic strain $\displaystyle \varepsilon^{p}_{0}$, the boundary data
$\displaystyle \bar{u} = \bar{u}(t), f_{s} = f_{s}(t)$, and the volume forces 
$\displaystyle f_{v} = f_{v}(t)$, for $t \in [0,T]$,   a solution of the 
evolution problem is a collection $\displaystyle (u, \varepsilon^{p},
\varepsilon^{e}, \sigma)$  of fields dependent on $t$, which satisfy the
kinematical, equilibrium, constitutive equations, as well as the initial and
boundary conditions.  

We want to give a variational formulation of the time discretisation of the
evolution problem. For this we consider a discretisation 
$$\left\{ t_{0} = 0 , t_{1}, ... , t_{N} = T \right\}$$ 
of the time interval $[0,T]$. For each $\displaystyle k = 0, ... ,N$ we denote
by $\displaystyle (u_{k}, \varepsilon^{p}_{k},\varepsilon^{e}_{k}, \sigma_{k})$ 
the unknowns at the moment $\displaystyle t_{k}$. We shall use also the
notation: for any $\displaystyle k = 0, ... ,N$, let 
$\displaystyle \Delta t_{k} = t_{k+1} - t_{k}$, $\displaystyle \Delta u_{k} =
u_{k+1} - u_{k}$, and so on, for all fields, known or unknown. 

Further on, we shall replace the time derivatives from the evolution equation by
finite differences with respect to the considered time discretisation. The
problem which we want to solve is the following one. 

\subsection{ Problem (Pdisc).} Given  $\displaystyle
(u_{k}, \varepsilon^{p}_{k},\varepsilon^{e}_{k}, \sigma_{k})$, find 
$\displaystyle (\Delta u, \Delta \varepsilon^{p},\Delta \varepsilon^{e}, \Delta
\sigma)$, solution of the following problem: 

\begin{equation}
\Delta \varepsilon^{e} + \Delta \varepsilon^{p} = D \left( \Delta u \right)
\label{pdisc1}
\end{equation}

\begin{equation}
\Delta \varepsilon^{e} = S \Delta \sigma
\label{pdisc2}
\end{equation}

\begin{equation}
\frac{1}{\Delta t_{k}} \Delta \varepsilon^{p} \in \partial 
b_{p} \left(\frac{1}{\Delta t_{k}} \Delta \varepsilon^{p}, \cdot\right) (\sigma_{k} +
\Delta \sigma)
\label{pdisc3}
\end{equation}

\begin{equation}
\Delta \sigma \in SA(\Delta f_{v,k}, \Delta f_{s, k})
\label{pdisc4}
\end{equation}

\begin{equation} 
\Delta u \in CA(\Delta \bar{u}_{k})
\label{pdisc5}
\end{equation}

The unknowns $\displaystyle
(u_{k+1}, \varepsilon^{p}_{k+1},\varepsilon^{e}_{k+1}, \sigma_{k+1})$ are obtained as 
$$ u_{k+1} = u_{k} + \Delta u \quad , \quad \varepsilon^{p}_{k+1} =
\varepsilon^{p}_{k} + \Delta \varepsilon^{p} \quad , ... $$

Our first concern is to express (Pdisc) with the help of bipotentials. 

\begin{lemma}
For any $k = 0, ..., N-1$, the function 
\begin{equation}
 b_{p, k} (\Delta \varepsilon^{p}, \Delta \sigma) = \, 
 \Delta t_{k} b_{p} \left(\frac{1}{\Delta t_{k}} \Delta \varepsilon^{p}, \sigma_{k} +
\Delta \sigma \right) - \langle \Delta \varepsilon^{p}, \sigma_{k} \rangle
\label{peka}
\end{equation} 
is a bipotential and the equation (\ref{pdisc3}) is equivalent with 
\begin{equation}
\Delta \varepsilon^{p} \in \partial b_{p, k} (\Delta \varepsilon^{p}, \cdot 
) (\Delta \sigma)
\label{pdisc3p}
\end{equation}
\label{lem1}
\end{lemma}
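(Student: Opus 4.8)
The plan is to work entirely in the language of syncs, via Proposition \ref{psync}, and then reduce to Proposition \ref{pchangebipo} plus one elementary observation about translations.

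First I would introduce $c_{p}(x,y) = b_{p}(x,y) - \langle x,y\rangle$, which is a sync by Proposition \ref{psync}, and compute the sync associated to the candidate $b_{p,k}$, namely $c_{p,k}(\Delta\varepsilon^{p},\Delta\sigma) = b_{p,k}(\Delta\varepsilon^{p},\Delta\sigma) - \langle\Delta\varepsilon^{p},\Delta\sigma\rangle$. Substituting the formula (\ref{peka}) and using bilinearity of the duality product in the form $\langle\Delta\varepsilon^{p},\sigma_{k}\rangle + \langle\Delta\varepsilon^{p},\Delta\sigma\rangle = \langle\Delta\varepsilon^{p},\sigma_{k}+\Delta\sigma\rangle = \Delta t_{k}\,\langle\tfrac{1}{\Delta t_{k}}\Delta\varepsilon^{p},\sigma_{k}+\Delta\sigma\rangle$, the stray term $\langle\Delta\varepsilon^{p},\sigma_{k}\rangle$ cancels and one is left with the clean identity
$$ c_{p,k}(\Delta\varepsilon^{p},\Delta\sigma) \, = \, \Delta t_{k}\, c_{p}\left(\frac{1}{\Delta t_{k}}\Delta\varepsilon^{p},\,\sigma_{k}+\Delta\sigma\right). $$
Getting this cancellation to come out exactly is really the only computational point of the argument.

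Next I would check that the right-hand side is a sync. It is obtained from the sync $c_{p}$ by three operations: the positive rescaling by $\alpha = \Delta t_{k} > 0$; the linear, bijective, continuous map $T = \tfrac{1}{\Delta t_{k}}\mathrm{Id}$ acting on the first variable; and the translation $(x,y)\mapsto(x,y+\sigma_{k})$ acting on the second variable. The first two operations are covered verbatim by Proposition \ref{pchangebipo} (with $L = \mathrm{Id}$). The translation is not literally an instance of that proposition — a translation is affine, not linear — but, as the excerpt emphasizes, duality products do not enter Definition \ref{defsync}: after the substitution $y' = y+\sigma_{k}$, conditions (a) and (b) for $(x,y)\mapsto c_{p}(x,y+\sigma_{k})$ follow at once from the same conditions for $c_{p}$ (sections of the epigraph remain convex and closed, and a minimum in $x$ or in $y$, if it exists, is attained at the translated point and still equals $0$). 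Combining, $c_{p,k}$ is a sync, so by Proposition \ref{psync} the function $b_{p,k}$ is a bipotential; moreover $M(b_{p,k}) = c_{p,k}^{-1}(0) = \{(\Delta\varepsilon^{p},\Delta\sigma) : (\tfrac{1}{\Delta t_{k}}\Delta\varepsilon^{p},\sigma_{k}+\Delta\sigma)\in M(b_{p})\}$.

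Finally, the equivalence of (\ref{pdisc3}) and (\ref{pdisc3p}) is read off the string of equivalences (\ref{equiva}) applied to $b_{p,k}$ and to $b_{p}$ in turn: $\Delta\varepsilon^{p}\in\partial b_{p,k}(\Delta\varepsilon^{p},\cdot)(\Delta\sigma)$ iff $(\Delta\varepsilon^{p},\Delta\sigma)\in M(b_{p,k})$ iff $(\tfrac{1}{\Delta t_{k}}\Delta\varepsilon^{p},\sigma_{k}+\Delta\sigma)\in M(b_{p})$ iff $\tfrac{1}{\Delta t_{k}}\Delta\varepsilon^{p}\in\partial b_{p}(\tfrac{1}{\Delta t_{k}}\Delta\varepsilon^{p},\cdot)(\sigma_{k}+\Delta\sigma)$, which is precisely (\ref{pdisc3}). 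I do not expect a genuine obstacle anywhere; the one place calling for care is the translation step, which is exactly what the paper's remark that duality products do not enter the definition of syncs is designed to cover.
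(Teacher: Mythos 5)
Your proposal is correct and follows essentially the same route as the paper: pass to the associated syncs, establish the identity $c_{p,k}(\Delta\varepsilon^{p},\Delta\sigma)=\Delta t_{k}\,c_{p}\bigl(\tfrac{1}{\Delta t_{k}}\Delta\varepsilon^{p},\sigma_{k}+\Delta\sigma\bigr)$, invoke Proposition \ref{pchangebipo}, and read the equivalence of (\ref{pdisc3}) and (\ref{pdisc3p}) off the graphs. You are in fact more careful than the published proof, which simply cites Proposition \ref{pchangebipo} and leaves the final equivalence to the reader: your observation that the translation $\Delta\sigma\mapsto\sigma_{k}+\Delta\sigma$ is affine rather than linear, hence not literally covered by that proposition and needing the separate (easy) argument that syncs are stable under translation, repairs a small gap the paper glosses over.
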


\begin{proof}
Let us show that 
$$c_{p,k}(\Delta \varepsilon^{p}, \Delta \sigma) = \, b_{p, k} (\Delta
\varepsilon^{p}, \Delta \sigma) - \langle \Delta \varepsilon^{p}, \Delta \sigma
\rangle$$
is a sync. For this we introduce the sync associated to the bipotential
$\displaystyle b_{p}$, namely 
$$c_{p}(\Delta \varepsilon^{p}, \Delta \sigma) = \, b_{p} (\Delta
\varepsilon^{p}, \Delta \sigma) - \langle \Delta \varepsilon^{p}, \Delta \sigma
\rangle$$
Remark that 
$$c_{p,k}(\Delta \varepsilon^{p}, \Delta \sigma) = \, \Delta t_{k} \,
c_{p}\left(\frac{1}{\Delta t_{k}} \Delta \varepsilon^{p}, \sigma_{k} + \Delta
\sigma\right)$$
We apply proposition \ref{pchangebipo} and get the result. By consequence, the
function $\displaystyle b_{p,k}$ defined by (\ref{peka}) is a bipotential. 
From here, the second part of the proposition is a straightforward computation
which is left for the interested reader. \end{proof}

\subsection{Simplification of the boundary conditions and volume forces.} It is
not a restriction of generality to suppose that the boundary conditions and
volume forces are trivial, that is to suppose that equations 
(\ref{pdisc4}), (\ref{pdisc5}) have the following form: 

\begin{equation}
\Delta \sigma \in SA(0, 0)
\label{pdisc4p}
\end{equation}

\begin{equation} 
\Delta u \in CA(0)
\label{pdisc5p}
\end{equation}

 Indeed, let us
choose a field $\displaystyle \Delta \bar{\sigma} \in SA(\Delta f_{v,k}, \Delta
f_{s, k})$. If we define  the new unknowns: 
$$\Delta u' = \Delta u \, - \, \Delta \bar{u} \quad , \quad \Delta \sigma' \, =
\, \Delta \sigma - \Delta \bar{\sigma}$$
then we could use  using again proposition \ref{pchangebipo} in order to prove
that the constitutive equations, in the new unknowns, can be expressed by
bipotentials. 

In order not to use a too heavy notation, further on we shall assume
(\ref{pdisc4p}), (\ref{pdisc5p}) and we shall neglect the change of unknowns 
(thus maintaining the notations $\displaystyle \Delta u$, $\Delta \sigma$).

\subsection{Elimination of several unknowns.} We can simplify the problem (Pdisc)
by a standard argument involving the elimination of the unknowns $\displaystyle
\Delta \varepsilon^{e}, \Delta \varepsilon^{p}$, by using an inf-convolution. 

Indeed, let us denote $\displaystyle \Delta \varepsilon = \Delta \varepsilon^{e}
+ \Delta \varepsilon^{p}$. By equation (\ref{pdisc1}), $\displaystyle \Delta
\varepsilon$ can be deduced from $\displaystyle \Delta u$. 

For any $\displaystyle \Delta \sigma$, the functions $\displaystyle b_{e}(\cdot,
\Delta \sigma)$ and $\displaystyle b_{p,k}(\cdot, \Delta \sigma)$ are not
everywhere infinite, are convex and lower semicontinuous, therefore we can
define the inf-convolution of them: 
\begin{equation}
\Delta b_{k} (\Delta \varepsilon, \Delta \sigma) \, = \, \left( b_{e}(\cdot,
\Delta \sigma) \, \square \, b_{p,k}(\cdot, \Delta \sigma) \right) (\Delta
\varepsilon)
\end{equation}

\begin{lemma}
\begin{equation}
\Delta \sigma \in \partial \Delta b_{k} (\cdot, \Delta \sigma) (\Delta \varepsilon)
\label{pdiscinf}
\end{equation}
is equivalent with: there are $\displaystyle \Delta \varepsilon^{e}, \Delta
\varepsilon^{p}$, such that $\displaystyle \Delta \varepsilon = \Delta \varepsilon^{e}
+ \Delta \varepsilon^{p}$, which satisfy, together with $\displaystyle \Delta
\sigma$, the equations (\ref{pdisc2}), \ref{pdisc3}).
\label{lem2}
\end{lemma}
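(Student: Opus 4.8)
The plan is to translate condition (\ref{pdiscinf}) into a statement about the global minimizers of an inf-convolution of two syncs, one of which is a coercive quadratic, and then to use coercivity of the elastic part to split such a minimizer and the defining property of syncs to identify the pieces of the split.

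I would first pass to syncs on both sides of the claimed equivalence. Set $c_{e} = b_{e} - \langle \cdot , \cdot \rangle$ and let $c_{p,k} = b_{p,k} - \langle \cdot , \cdot \rangle$ be the sync of $b_{p,k}$ (proposition \ref{psync}, lemma \ref{lem1}). Since, by property (b) of bipotentials, $b_{e}(\cdot, \Delta \sigma)$ and $b_{p,k}(\cdot, \Delta \sigma)$ both dominate $\langle \cdot , \Delta \sigma \rangle$, subtracting $\langle \Delta \varepsilon , \Delta \sigma \rangle$ inside the definition of $\Delta b_{k}$ and writing $\Delta \varepsilon = \Delta \varepsilon^{e} + \Delta \varepsilon^{p}$ yields
$$ f(z) \ := \ \Delta b_{k}(z, \Delta \sigma) - \langle z , \Delta \sigma \rangle \ = \ \big( c_{e}(\cdot, \Delta \sigma) \, \square \, c_{p,k}(\cdot, \Delta \sigma) \big)(z) \ \geq \ 0 , $$
and (\ref{pdiscinf}) is exactly the assertion $0 \in \partial f (\Delta \varepsilon)$, i.e. that $\Delta \varepsilon$ is a global minimizer of $f$. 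On the other side, equation (\ref{pdisc2}) is — through the quadratic separable bipotential $b_{e}$, cf. (\ref{clawe}) and property (c) of bipotentials — equivalent to $c_{e}(\Delta \varepsilon^{e}, \Delta \sigma) = 0$, where $c_{e}(\cdot, \Delta \sigma)$ is a strictly convex coercive quadratic form (since $S$ is symmetric positive definite) with a unique minimizer, at which it vanishes; and equation (\ref{pdisc3}) is equivalent to (\ref{pdisc3p}) by lemma \ref{lem1}, hence, by property (c) applied to the bipotential $b_{p,k}$, to $c_{p,k}(\Delta \varepsilon^{p}, \Delta \sigma) = 0$. So the lemma reduces to the statement: $\Delta \varepsilon$ minimizes $f$ if and only if there is a decomposition $\Delta \varepsilon = \Delta \varepsilon^{e} + \Delta \varepsilon^{p}$ with $c_{e}(\Delta \varepsilon^{e}, \Delta \sigma) = 0$ and $c_{p,k}(\Delta \varepsilon^{p}, \Delta \sigma) = 0$.

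The implication $(\Leftarrow)$ is immediate: such a decomposition gives $0 \leq f(\Delta \varepsilon) \leq c_{e}(\Delta \varepsilon^{e}, \Delta \sigma) + c_{p,k}(\Delta \varepsilon^{p}, \Delta \sigma) = 0$, so $f(\Delta \varepsilon) = 0 = \inf f$. For $(\Rightarrow)$, assume $\Delta \varepsilon$ minimizes $f$ (the case $\Delta b_{k}(\cdot, \Delta \sigma) \equiv +\infty$ being trivial, both sides then being false). The crucial point is that the infimum defining $f(\Delta \varepsilon)$ is attained: the map $u \mapsto c_{e}(u, \Delta \sigma) + c_{p,k}(\Delta \varepsilon - u, \Delta \sigma)$ is lsc, finite somewhere (since $f(\Delta \varepsilon) < +\infty$), and coercive in $u$ because $c_{e}(\cdot, \Delta \sigma)$ is a coercive quadratic and $c_{p,k} \geq 0$; in finite dimension it therefore attains its minimum $f(\Delta \varepsilon)$ at some $\Delta \varepsilon^{e}$, and we put $\Delta \varepsilon^{p} = \Delta \varepsilon - \Delta \varepsilon^{e}$. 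An exchange argument then shows $\Delta \varepsilon^{e}$ and $\Delta \varepsilon^{p}$ are global minimizers of $c_{e}(\cdot, \Delta \sigma)$ and $c_{p,k}(\cdot, \Delta \sigma)$ respectively: were there $u'$ with $c_{e}(u', \Delta \sigma) < c_{e}(\Delta \varepsilon^{e}, \Delta \sigma)$, the point $u' + \Delta \varepsilon^{p}$ would satisfy $f(u' + \Delta \varepsilon^{p}) < f(\Delta \varepsilon)$, contradicting minimality of $\Delta \varepsilon$; symmetrically for $\Delta \varepsilon^{p}$. Finally, $\Delta \varepsilon^{e}$ minimizing the quadratic $c_{e}(\cdot, \Delta \sigma)$ forces $c_{e}(\Delta \varepsilon^{e}, \Delta \sigma) = 0$, i.e. (\ref{pdisc2}); and $\Delta \varepsilon^{p}$ minimizing $c_{p,k}(\cdot, \Delta \sigma)$ means this section attains a minimum, which — $c_{p,k}$ being a sync, condition (b) of definition \ref{defsync} — equals $0$, i.e. $c_{p,k}(\Delta \varepsilon^{p}, \Delta \sigma) = 0$, which by lemma \ref{lem1} is (\ref{pdisc3}).

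I expect the main obstacle to be the $(\Rightarrow)$ direction, and within it two points need care: (i) the exactness of the inf-convolution at $\Delta \varepsilon$, where the coercivity of the elastic quadratic is essential — this is the only place the particular form of $b_{e}$ is used; and (ii) the step from "$\Delta \varepsilon^{p}$ minimizes $c_{p,k}(\cdot, \Delta \sigma)$" to "$c_{p,k}(\Delta \varepsilon^{p}, \Delta \sigma) = 0$", which is delivered by the sync property and is precisely what makes the argument work for a general, possibly non-separable, plastic bipotential $b_{p}$ rather than only in the Dr\"ucker-Prager example. Alternatively, the exchange argument together with (ii) can be replaced by the classical formula $\partial \big( g_{1} \square g_{2} \big)(\Delta \varepsilon) = \partial g_{1}(\Delta \varepsilon^{e}) \cap \partial g_{2}(\Delta \varepsilon^{p})$, valid at a point $\Delta \varepsilon$ where the inf-convolution is exact with splitting $\Delta \varepsilon = \Delta \varepsilon^{e} + \Delta \varepsilon^{p}$.
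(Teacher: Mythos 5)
Your proof is correct and follows the same route as the paper: the paper simply invokes ``a well known property of inf-convolutions'' to split $\partial \Delta b_{k}(\cdot,\Delta\sigma)(\Delta\varepsilon)$ into the two conditions $\Delta\sigma\in\partial b_{e}(\cdot,\Delta\sigma)(\Delta\varepsilon^{e})$ and $\Delta\sigma\in\partial b_{p,k}(\cdot,\Delta\sigma)(\Delta\varepsilon^{p})$, and then uses the bipotential equivalences to convert these into (\ref{pdisc2}) and (\ref{pdisc3}), exactly as you do via the syncs $c_{e}$ and $c_{p,k}$. The only substantive difference is that you actually verify the exactness of the inf-convolution at $\Delta\varepsilon$ (via coercivity of the elastic quadratic) and the identification of the minimum values (via the sync property), steps the paper leaves implicit in its citation of the ``well known property''.
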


\begin{proof}
Indeed, by a well known property of inf-convolutions, equation (\ref{pdiscinf})
is equivalent with: there are $\displaystyle \Delta \varepsilon^{e}, \Delta
\varepsilon^{p}$, such that $\displaystyle \Delta \varepsilon = \Delta \varepsilon^{e}
+ \Delta \varepsilon^{p}$, which satisfy 
\begin{equation}
\Delta \sigma \in \partial  b_{e} (\cdot, \Delta \sigma) (\Delta \varepsilon^{e})
\label{pdiscinf1}
\end{equation}
\begin{equation}
\Delta \sigma \in \partial  b_{p,k} (\cdot, \Delta \sigma) (\Delta \varepsilon^{p})
\label{pdiscinf2}
\end{equation}
But both $\displaystyle b_{e}$ and $\displaystyle b_{p,k}$ are bipotentials,
therefore (\ref{pdiscinf1}) is equivalent with (\ref{pdisc2}) and
(\ref{pdiscinf2}) is equivalent with (\ref{pdisc3p}), which is equivalent with 
(\ref{pdisc3}) by lemma \ref{lem1}. \end{proof}

\begin{remark}
Because of the particular form of $\displaystyle b_{e}$ (quadratic function),
the inf-convolution $\displaystyle \Delta b_{k} (\cdot, \Delta \sigma)$ is 
differentiable, with Lipschitz gradient, as a kind of Moreau-Yosida regularization. Therefore the 
inclusion  (\ref{pdiscinf}) is equivalent with a standard equality, because 
the set from the right hand side contains only one element. This is an
 well known advantage of  this elimination of unknowns in associated plasticity.
 \end{remark}
 
Let us list the properties of the function $\displaystyle \Delta b_{k}$: 
\begin{enumerate}
\item[-] it is lower semicontinuous (even differentiable, with Lipschitz gradient
 in the first argument)
\item[-] $\displaystyle \Delta b_{k}$ is defined via an inf-convolution of a
bipotential of type (\ref{bepetrue}) with the
elastic bipotential $\displaystyle b_{e}$, therefore it satisfies the same
growth inequality as $\displaystyle b_{e}$ namely there is a constant $C > 0$  such that 
for any $\Delta \varepsilon \in Sym(n)$ and $\Delta \sigma \in Sym(n)$, if $\displaystyle 
 \Delta b_{k} (\Delta \varepsilon , \Delta \sigma) < + \infty$ then 
$$ \Delta b_{k} (\Delta \varepsilon , \Delta \sigma) \, \leq \, C \, \left( \| \Delta
\varepsilon \|^{2} + \| \Delta \sigma \|^{2} \right)$$
where $\displaystyle \| \cdot \|$ is an arbitrary euclidean norm on the
space $Sym(n)$, 
\item[-] it satisfies a weak form of the Fenchel inequality, 
$$\Delta b_{k} (\Delta \varepsilon , \Delta \sigma) \, \geq \, \langle \Delta
\varepsilon , \Delta \sigma \rangle$$ 
\item[-] it is convex in the first argument, but not in the second, therefore it
is not a bipotential, as it is stated in Theorem 6.1 \cite{bergadesaxce}. Remark
however that  the proof of Lemma \ref{lem2} uses the fact that the function
 $\displaystyle  b_{p,k}$ is a bipotential.
\end{enumerate}

We collect the partial results obtained so far into the following theorem, 
which provides a simplified  form of the problem (Pdisc). 

\begin{theorem}
The problem (Pdisc) is equivalent with the following one: find $\displaystyle 
(\Delta u, \Delta \sigma) \in  CA(0) \times SA(0,0)$ which satisfy
(\ref{pdiscinf}).
\end{theorem}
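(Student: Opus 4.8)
The plan is to assemble the statement directly from Lemma \ref{lem2} together with the reduction to trivial boundary data carried out above, so that throughout we work with (Pdisc) in the equivalent form where (\ref{pdisc4}), (\ref{pdisc5}) are replaced by (\ref{pdisc4p}), (\ref{pdisc5p}). The only genuine content is the elimination of the two strain unknowns $\Delta \varepsilon^{e}$, $\Delta \varepsilon^{p}$, and this has already been isolated in Lemma \ref{lem2}; what remains is bookkeeping of which variables are free and which are slaved.

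First I would prove the forward implication. Suppose $(\Delta u, \Delta \varepsilon^{p}, \Delta \varepsilon^{e}, \Delta \sigma)$ solves (Pdisc). Set $\Delta \varepsilon = \Delta \varepsilon^{e} + \Delta \varepsilon^{p}$; by (\ref{pdisc1}) this equals $D(\Delta u)$, so $\Delta \varepsilon$ is the strain associated to the kinematically admissible $\Delta u \in CA(0)$. Equations (\ref{pdisc2}) and (\ref{pdisc3}) say precisely that $\Delta \varepsilon^{e}$, $\Delta \varepsilon^{p}$ realize the decomposition $\Delta \varepsilon = \Delta \varepsilon^{e} + \Delta \varepsilon^{p}$ required in Lemma \ref{lem2}, hence (\ref{pdiscinf}) holds for this $\Delta \varepsilon = D(\Delta u)$. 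Together with $\Delta u \in CA(0)$ and $\Delta \sigma \in SA(0,0)$, this is exactly the reduced problem.

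Next, the converse. Given $(\Delta u, \Delta \sigma) \in CA(0) \times SA(0,0)$ satisfying (\ref{pdiscinf}), I would put $\Delta \varepsilon := D(\Delta u)$ and apply Lemma \ref{lem2}: since $\Delta \sigma \in \partial \Delta b_{k}(\cdot, \Delta \sigma)(\Delta \varepsilon)$, there exist $\Delta \varepsilon^{e}$, $\Delta \varepsilon^{p}$ with $\Delta \varepsilon = \Delta \varepsilon^{e} + \Delta \varepsilon^{p}$ satisfying (\ref{pdisc2}) and (\ref{pdisc3}). Then (\ref{pdisc1}) holds because $\Delta \varepsilon^{e} + \Delta \varepsilon^{p} = \Delta \varepsilon = D(\Delta u)$, while (\ref{pdisc4p}) and (\ref{pdisc5p}) hold by hypothesis; so $(\Delta u, \Delta \varepsilon^{p}, \Delta \varepsilon^{e}, \Delta \sigma)$ solves the reduced (Pdisc), and unwinding the change of unknowns from the boundary-condition simplification returns a solution of the original (Pdisc).

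I do not expect a real obstacle here: the substantive step (the inf-convolution splitting) is Lemma \ref{lem2}, and everything else is matching the definition of (Pdisc) line by line. The one point that deserves care is that in the reduced problem $\Delta \varepsilon$ does not appear as an independent unknown --- it is forced to be $D(\Delta u)$ --- so one must check that the $\Delta \varepsilon$ produced (forward) and consumed (backward) by Lemma \ref{lem2} is consistently this kinematically determined strain, which is exactly what (\ref{pdisc1}) encodes.
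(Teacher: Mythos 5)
Your proposal is correct and matches the paper's intent exactly: the paper states this theorem as a collection of the partial results already obtained (Lemma \ref{lem1}, Lemma \ref{lem2}, and the reduction to trivial boundary data), offering no further proof, and your argument is precisely the bookkeeping that assembles those pieces in both directions. The one point you flag --- that $\Delta \varepsilon$ is not an independent unknown but is forced to equal $D(\Delta u)$ via (\ref{pdisc1}) --- is indeed the only place where care is needed, and you handle it correctly.
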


\section{Variational formulation of the problem (Pdisc)}

We give further a variational formulation \`a la Nayroles \cite{nayroles} of the
following general problem, which contains (Pdisc) as a particular case. 

We consider a first pair of spaces in duality: 
\begin{enumerate}
\item[-] $X = L^{2}(\Omega, Sym(n))$ is the space of the deformation fields 
$\varepsilon$, 
\item[-]  $Y = L^{2}(\Omega, Sym(n))$ is the space of stress fields $\sigma$.
\end{enumerate}
Instead of equalities $\displaystyle X, Y = L^{2}(\Omega, Sym(n))$, we may
consider that $X$ and $Y$ are topological, locally convex, real vector spaces 
of dual variables $\varepsilon \in X$ and $\sigma \in Y$, with the duality 
product $\displaystyle \langle \cdot , \cdot \rangle_{1} : X \times Y \rightarrow \mathbb{R}$, 
endowed with two injective continuous linear transformations 
$\displaystyle A: X  \rightarrow  L^{2}(\Omega, Sym(n))$ and 
$\displaystyle B: X  \rightarrow  L^{2}(\Omega, Sym(n))$ such that 
$$\displaystyle \langle \varepsilon, \sigma \rangle_{1} = \int_{\Omega} 
 \langle A(\varepsilon)(x),  B(\sigma)(x)\rangle \mbox{ d} x \, = \langle
 A(\varepsilon), B(\sigma) \rangle$$
In the integral we see the duality product (scalar product) on the space
$Sym(n)$ of $n \times n$ symmetric real matrices. In the right hand side we see
the duality product (scalar product) of $\displaystyle L^{2}$ with itself.

The space $X$, $Y$ may be finite dimensional (for example associated with a
discretisation in space by finite elements)  or infinite dimensional. In the
following we shall omit to mention the injections $A, B$ or any other similar
transformations which may appear. As an exception,  in the following 
theorem \ref{mainthm}, part (I), we need the spaces $X,Y$ to be "large enough"
in order to be able to prove that a solution of the variational formulation is
also a solution (almost everywhere) of the original problem. 
 
 $U$ is the space of $CA(0)$ displacement fields $\displaystyle u \in W^{1,2}(\Omega, \mathbb{R}^{n})$,  with 
 and $\displaystyle u = 0$ on
$\displaystyle \partial_{0} \Omega$ in the sense of trace. The linear
transformation $D: U \rightarrow X$, 
$\displaystyle \varepsilon = D(u) = \frac{1}{2} \left( \nabla u + \nabla u^{T}
\right)$ is continuous and $V = D(U) \subset X$ is the image.  

The space $\displaystyle Y_{0} \subset Y$ of statically admissible $SA(0,0)$
 stresses appear as the space of $\sigma \in Y$ with the property that for any $u \in U$ we have 
$$\langle D(u), \sigma \rangle_{1} = 0$$

We consider a function $\displaystyle b: Sym(n) \times K \rightarrow
\mathbb{R}$ with the following properties: 
\begin{enumerate}
\item[(a)] $K \subset Sym(n)$ is a closed convex set of the form $\displaystyle 
K = a + K_{0}$, with
$a \in Sym(n)$ and $\displaystyle K_{0} \subset Sym(n)$ a closed convex cone, 
such that $0 \in K$ (this is the set of plastically admissible stresses, as in the definition of the
Dr\"{u}cker-Prager plasticity). Let 
$\displaystyle \pi_{K}: Sym(n) \rightarrow K$ be the projection on this cone;  
\item[(b)] $b$  is lower semicontinuous in both arguments, differentiable with
Lipschitz gradient  and convex
in the first argument; moreover we suppose that the Lipschitz constant of the 
gradient of $b$ in the first
argument is continuous with respect to the second variable;  
\item[(c)] $b$ satisfies, for any $\varepsilon, \sigma \in Sym(n)$, the inequality: 
$b(\varepsilon, \sigma) \geq \langle \varepsilon, \sigma \rangle$,
\item[(d)] there is a constant $C > 0$  such that 
for any $\varepsilon \in Sym(n)$ and $\sigma \in K$ we have 
 \begin{equation}
  b (\varepsilon , \sigma) \, \leq \, C \, \left( \| \varepsilon \|^{2} + 
  \| \sigma \|^{2} \right)
  \label{growth}
  \end{equation}
\end{enumerate}
Associated to the function $b$ is the "bifunctional" of Berga and de Saxc\'e: 
$$B(\varepsilon, \sigma) \, = \, \int_{\Omega} b(\varepsilon(x), \sigma(x)) \mbox{
d}x$$

Our main theorem is the following: 

\begin{theorem} 
Suppose that the function $b$ takes only finite values, that is 
for any $(\varepsilon, \sigma) \in Sym(n) \times Sym(n)$ we have $b(\varepsilon,
\sigma) < + \infty$. 

(I) Let $u \in U$ and $\displaystyle \sigma \in Y_{0}$.  
The pair $(u, \sigma)$ satisfies 
almost everywhere in $\Omega$ 
$$\sigma \in \partial b(\cdot , \sigma) (D(u))$$
if and only if  for any $\varepsilon \in X$ we have 
\begin{equation}
B(D(u), \sigma) \, \leq \, B(\varepsilon, \sigma) - \langle \varepsilon, \sigma
\rangle_{1}
\label{maineq1}
\end{equation}

(II) For any $\displaystyle u^{0} \in  U$,  $\sigma^{0} \in  Y_{0}$  there is a 
sequence $\displaystyle (u^{k}, \sigma^{k})_{k}$ in  $\displaystyle U \times
Y_{0}$, such that for any $k \in \mathbb{N}$: 
\begin{enumerate}
  \item[a.](global condition) for all $v \in U$ the displacement
    $\displaystyle u^{k+1} \in U $ satisfies
 $$ B(D(u^{k+1}), \sigma^{k}) \leq B(D(v), \sigma^{k})$$ 
  \item[b.](local condition) the stress $\displaystyle \sigma^{k+1}$ satisfies
  almost everywhere in $\Omega$ the relation $$\displaystyle \sigma^{k+1} 
  \in  \partial 
  b(\cdot , \sigma^{k})(D(u^{k+1}))$$  
  \end{enumerate}
  
(III) If a sequence $\displaystyle (u^{k}, \sigma^{k})_{k}$ from (II) has a
subsequence (denoted by same symbols) such that $\displaystyle u^{k}$ converges
weakly in $\displaystyle W^{1,2}$ to $u$ and $\sigma^{k}$ converges weakly in
$L^{2}$ to $\sigma$, then $(u,\sigma)$ is a solution of the problem
(\ref{maineq1}).
\label{mainthm}
\end{theorem}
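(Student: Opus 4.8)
\emph{Part (I).} The plan is to convert the pointwise subgradient relation $\sigma(x) \in \partial b(\cdot,\sigma(x))(D(u)(x))$ (a.e.) into the global inequality \eqref{maineq1}. One direction is immediate: since $b$ is convex in the first argument, the subgradient inequality at $D(u)(x)$ reads $\langle \varepsilon(x) - D(u)(x), \sigma(x)\rangle \le b(\varepsilon(x),\sigma(x)) - b(D(u)(x),\sigma(x))$ for all test values; integrating over $\Omega$ and rearranging gives $B(D(u),\sigma) \le B(\varepsilon,\sigma) - \langle \varepsilon,\sigma\rangle_1 + \langle D(u),\sigma\rangle_1$, and the last term vanishes because $\sigma \in Y_0$ and $u \in U$ (this is exactly the defining property of $Y_0$). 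For the converse, I would argue by a localization/contradiction: if \eqref{maineq1} holds for all $\varepsilon \in X$ but the pointwise relation fails on a set of positive measure, pick a point of approximate continuity where it fails, perturb $\varepsilon$ only on a small ball around that point (using $X$ "large enough", as the authors flag), and derive a violation of \eqref{maineq1}. Here one uses that $b(D(u)(x),\sigma(x))$ is finite (the standing hypothesis that $b$ is finite-valued) so the integrals make sense, and the growth bound (d) together with $u \in W^{1,2}$, $\sigma \in L^2$ to ensure integrability of $b(D(u),\sigma)$ and of the perturbed integrand.

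\emph{Part (II).} This is the construction of the alternating/Uzawa-type sequence. Given $(u^k,\sigma^k)$, step (a) asks for $u^{k+1}$ minimizing $v \mapsto B(D(v),\sigma^k)$ over $U$. I would establish existence by the direct method: by (b) the integrand $\varepsilon \mapsto b(\varepsilon,\sigma^k)$ is convex and differentiable, hence weakly lower semicontinuous on $X$; composed with the continuous linear $D$ it is weakly lsc on $U$; and one needs coercivity on $U/\ker D$ to get a minimizing sequence with a weakly convergent subsequence — coercivity coming from the Fenchel-type bound (c) $b \ge \langle\cdot,\cdot\rangle$ combined with the elastic (quadratic) part hidden inside $b$ that survived the inf-convolution, plus a Korn-type inequality on $U$ (recall $u = 0$ on $\partial_0\Omega$). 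Then $u^{k+1}$ is any minimizer. For step (b), define $\sigma^{k+1}(x) := \nabla_\varepsilon b(D(u^{k+1})(x),\sigma^k(x))$ pointwise; since $b$ is differentiable with Lipschitz gradient in the first variable and the Lipschitz constant depends continuously on the second variable (hypothesis (b)), and $D(u^{k+1}) \in L^2$, $\sigma^k \in L^2$, this defines an $L^2$ field. One must check $\sigma^{k+1} \in Y_0$: the Euler–Lagrange equation for the minimization in (a) says precisely $\langle D(v), \nabla_\varepsilon b(D(u^{k+1}),\sigma^k)\rangle_1 = 0$ for all $v \in U$, which is the definition of $Y_0$. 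And $\sigma^{k+1} \in \partial b(\cdot,\sigma^k)(D(u^{k+1}))$ holds because for a convex differentiable function the gradient is the unique subgradient.

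\emph{Part (III).} Assume the stated weak convergences $u^k \rightharpoonup u$ in $W^{1,2}$, $\sigma^k \rightharpoonup \sigma$ in $L^2$ (along a subsequence). The goal is to pass to the limit in the pair of relations (a)–(b) and recover \eqref{maineq1}. The natural route is: from (b) for index $k$, for all $\varepsilon \in X$, $B(D(u^{k+1}),\sigma^k) \le B(\varepsilon,\sigma^k) - \langle \varepsilon - D(u^{k+1}),\sigma^k\rangle_1$ — this is the integrated subgradient inequality as in Part (I), now with $\sigma^k$ not yet in $Y_0$ but we know $\sigma^{k}\in Y_0$ for $k\ge 1$, so $\langle D(u^{k+1}),\sigma^k\rangle_1$ need not vanish and must be carried along. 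Take liminf on the left (weak lsc of $B$, via convexity in the first argument and the fact that $\sigma^k \to \sigma$) and lim on the right: the term $B(\varepsilon,\sigma^k) \to B(\varepsilon,\sigma)$ needs continuity of $\sigma \mapsto b(\varepsilon,\sigma)$ pointwise plus a dominating bound from (d) and equi-integrability — this is the delicate point since $b$ is \emph{not} assumed convex in $\sigma$, only lsc, so weak convergence of $\sigma^k$ alone is not enough; one uses that $\varepsilon$ here is a \emph{fixed} test field and the growth bound (d) to invoke dominated/Vitali convergence, possibly after extracting a further subsequence with a.e. convergence of $\sigma^k$. The cross term $\langle \varepsilon - D(u^{k+1}),\sigma^k\rangle_1 \to \langle \varepsilon - D(u),\sigma\rangle_1$ by weak–weak pairing (product of weakly convergent sequences is handled because one factor, $\varepsilon$, is fixed, and $D(u^{k+1}) \rightharpoonup D(u)$ pairs against $\sigma^k \rightharpoonup \sigma$ only after noting one of them converges strongly — in fact $\langle D(u^{k+1}),\sigma^k\rangle_1$ is exactly $0$ when $\sigma^k\in Y_0$, which it is, so this subtlety evaporates). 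After the limit one gets $B(D(u),\sigma) \le B(\varepsilon,\sigma) - \langle \varepsilon,\sigma\rangle_1 + \langle D(u),\sigma\rangle_1$; finally one checks $\sigma \in Y_0$ (weak limit of $\sigma^k \in Y_0$, and $Y_0$ is a weakly closed subspace), so the last term vanishes and \eqref{maineq1} follows.

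\textbf{Main obstacle.} The crux is in Part (III): passing to the limit on the right-hand side through the term $B(\varepsilon,\sigma^k)$ when $b$ is merely lower semicontinuous — not convex — in the stress variable. Weak $L^2$ convergence of $\sigma^k$ does not by itself control $\int_\Omega b(\varepsilon(x),\sigma^k(x))\,dx$; the argument must lean on the quadratic growth bound (d) to extract an a.e.-convergent subsequence and apply a Vitali-type equi-integrability argument, and one should double-check that the finite-valuedness of $b$ and the continuity of the Lipschitz constant in the second variable give enough regularity for this. The weak lower semicontinuity of the left-hand side is comparatively routine (convexity in the first slot plus a diagonal argument handling the slowly-varying second slot $\sigma^k \to \sigma$).
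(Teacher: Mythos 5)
Your plan follows essentially the same route as the paper in all three parts: for (I) the paper integrates the subgradient inequality one way and, for the converse, localizes at a Lebesgue point of $D(u)$ and $\sigma$ by perturbing $\varepsilon$ on a small ball $B(x_{0},r)$ and letting $r\to 0$, exactly as you sketch; for (II) it obtains $u^{k+1}$ by the direct method and proves $\sigma^{k+1}\in Y_{0}$ by taking $v=u^{k+1}+\lambda w$ and letting $\lambda\to 0^{+}$ in the integrated local condition, which is precisely your Euler--Lagrange argument; for (III) it passes to the limit in the integrated form of the local condition, asserting that $v\mapsto B(D(v),\sigma^{k})$ converges ``in the variational sense'' to $v\mapsto B(D(v),\sigma)$. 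Two caveats. First, a small indexing slip: the subgradient in the local condition is $\sigma^{k+1}$, so the integrated inequality reads $B(\varepsilon,\sigma^{k})-\langle \varepsilon-D(u^{k+1}),\sigma^{k+1}\rangle_{1}\geq B(D(u^{k+1}),\sigma^{k})$, with $\sigma^{k+1}$ (not $\sigma^{k}$) in the pairing; this is harmless since $\langle D(u^{k+1}),\sigma^{k+1}\rangle_{1}=0$ anyway. Second, your proposed remedy for the genuinely delicate point --- handling $B(\varepsilon,\sigma^{k})\to B(\varepsilon,\sigma)$ when $b$ is not convex in the stress variable --- does not work as stated: weak $L^{2}$ convergence of $\sigma^{k}$ does not yield an a.e.-convergent subsequence (consider $\sin(kx)\rightharpoonup 0$), so the Vitali/dominated-convergence route is not available without additional compactness. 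You have correctly isolated the weak spot of the argument; note, however, that the paper's own proof is no more rigorous there, since it simply asserts the variational convergence of the functionals without justification.
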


\begin{proof}
(I) We follow the convention: we identify an element of $\displaystyle 
g \in L^{2}(\Omega, Sym(n))$ (which is an equivalence class of functions) 
with its representant, defined almost everywhere in $\Omega$ by Lebesgue
theorem.   

Let $u \in U$ and $\displaystyle \sigma \in Y_{0}$, such that we have $\sigma
\in \partial b(\cdot , \sigma) (D(u))$ almost everywhere in $\Omega$. 
Let us take $\varepsilon \in X$. Then, by integration of the constitutive relation (and by the definition
of $\displaystyle Y_{0}$), we have 
$$\int_{\Omega} b(\varepsilon(x), \sigma(x)) \mbox{ d}x \, - \int_{\Omega}
\langle \varepsilon(x) , \sigma(x) \rangle \mbox{ d}x \, \geq 
\int_{\Omega} b(D(u)(x), \sigma(x)) \mbox{ d} x$$
which is exactly the relation (\ref{maineq1}). 

Conversely, let us start from the last integral inequality, supposed to be true
for any $\varepsilon \in X$. Further we suppose also that $\displaystyle X =
L^{2}(\Omega, Sym(n))$. Let us pick an arbitrary 
$\displaystyle x_{0}$ in the
intersection of the Lebesgue sets of $D(u)$ and $\sigma$.  
For any open ball $\displaystyle B(x_{0}, r) \subset \Omega$ 
centered in $\displaystyle x_{0}$
 we define $\varepsilon_{r} \in X$ such that  $\varepsilon_{r} = D(u)$ almost everywhere outside $B$. We
 obviously get that 
 $$\frac{1}{\mid B(x_{0},r)\mid}\left(\int_{B(x_{0}, r)} b(\varepsilon_{r}(x), 
 \sigma(x)) \mbox{ d}x \, - \, \int_{B(x_{0}, r)}
\langle \varepsilon_{r}(x) - D(u)(x) , \sigma(x) \rangle \mbox{ d}x  \right) \, \geq $$ 
\begin{equation}
\, \geq \, 
\frac{1}{\mid B(x_{0},r)\mid} \int_{B(x_{0}, r)} b(D(u)(x), \sigma(x)) \mbox{ d} x
\label{needi}
\end{equation}
For any $\displaystyle \bar{\varepsilon} \in Sym(n)$ we can choose  for any $
r>0$ (but sufficiently small) an $\displaystyle \varepsilon_{r}$ such that 
$$ \lim_{r \rightarrow 0} \frac{1}{\mid B(x_{0},r)\mid} \int_{B(x_{0},r)}
\varepsilon_{r}(x)  \mbox{ d}x \, = \, \bar{\varepsilon}$$
and such that we can  pass to the limit with $r$ to $0$, to obtain: 
$$b(\bar{\varepsilon}, \sigma(x_{0})) - \langle \bar{\varepsilon} - D(u)(x_{0}),
\sigma(x_{0}) \rangle \, \geq \, b(D(u)(x_{0}), \sigma(x_{0}))$$
This is equivalent with the satisfaction of the constitutive relation almost
everywhere. 

(II) Suppose that for $k \in \mathbb{N}$ we defined the element $\displaystyle
(u^{k}, \sigma^{k})$ of the sequence. We want to prove the existence of 
$\displaystyle (u^{k+1}, \sigma^{k+1})$ which satisfy the global condition (a),
 the local condition (b) and $\displaystyle \sigma^{k+1} \in Y_{0}$. 
 
 By the convexity, growth and continuity conditions on $b$, we easily obtain the
 existence of a minimizer of the functional  $\displaystyle 
 u \in U \, \mapsto \, B(D(u), \sigma^{k})$. This proves the existence of 
 $\displaystyle u^{k+1}$ which satisfy the global condition (a). The local
 condition (b) is in fact the definition of $\displaystyle \sigma^{k+1}$.
 Because of the differentiability and continuity conditions on $b$, it easily
 follows from $\displaystyle \sigma^{k} \in Y$ that $\sigma^{k+1} \in Y$. We
 have to prove that $\displaystyle \sigma^{k+1} \in Y_{0}$. For this, we choose
 an arbitrary $v \in U$ and we integrate the local condition (b). We obtain that
 $$B(D(v), \sigma^{k}) - B(D(u^{k+1}), \sigma^{k}) \, \geq \, \langle D(v) -
 D(u^{k+1}), \sigma^{k+1} \rangle$$
 The left hand side of this inequality is non negative (by the global condition)
 and it can be made arbitrarily small, for example by choosing 
 $\displaystyle v = u^{k+1} + \lambda w$, for a given, but arbitrary $w \in U$
 and $\lambda > 0$ smaller and smaller. As a conclusion we obtain
 that for any $w \in U$ we have 
 $$\langle D(w) , \sigma^{k+1} \rangle \leq 0$$ 
 which implies that $\displaystyle \sigma^{k+1} \in Y_{0}$. 
 
 (III) Suppose that $\displaystyle (u^{k}, \sigma^{k})$ converges, in the given
 sense, to $(u, \sigma)$. The sequence of functionals $v \mapsto B(D(v),
 \sigma^{k})$ converges in the variational sense to the functional 
 $v \mapsto B(D(v),  \sigma)$, so, up to the choice of a subsequence, the
 minimizers of these respective functionals (namely the $\displaystyle u^{k+1}$)
 converge to a minimizer of the latter functional. Therefore $(u,\sigma)$
 satisfy the condition 
 $$B(D(v), \sigma) \geq B(D(u), \sigma)$$
 for any $v \in U$. 
 
 The limit $\sigma$ is in $\displaystyle Y_{0}$ by construction. We can also
 pass to the limit in the integral form of the local condition, which is: for
 any $\displaystyle \varepsilon \in X$ 
 $$B(\varepsilon, \sigma^{k}) - \langle \varepsilon - D(u^{k+1}), \sigma^{k+1}
 \rangle \geq B(D(u^{k+1}), \sigma^{k})$$
 and we get the relation (\ref{maineq1}). 
\end{proof}
 
The previous theorem contains at part (II) an algorithm for finding a solution
of the problem (Pdisc). This algorithm is the rigorous reformulation of an
algorithm proposed in \cite{bergadesaxce} section 8. 

However, this theorem can be improved (and will be, in further research) in
several respects. Firstly, in the case of Dr\"{u}cker-Prager plasticity, the 
 function $\Delta b_{k}$ takes also infinite values. In this case the algorithm
 for solving the problem (Pdisc) should take the following form. Let $K$ denote the set of
 plastically admissible stresses. Then: 
 \begin{enumerate}
 \item[0.] initialize $\displaystyle (u^{0}, \sigma^{0})$ (for example take them
 equal to $(0,0)$, 
 \item[1.] repeat: given $\displaystyle (u^{k}, \sigma^{k}) \in U \times Y$, 
 \begin{enumerate}
  \item[a.](global condition) find $\displaystyle u^{k+1}$ such that 
  for all $v \in U$ 
  $$ B(D(u^{k+1}), \sigma^{k}) - \langle D(u^{k+1}), \sigma^{k} \rangle \leq 
  B(D(v), \sigma^{k}) - \langle D(v), \sigma^{k} \rangle$$ 
  \item[b.](local condition) define the stress $\displaystyle \sigma^{k+1}$ 
  almost everywhere in $\Omega$ by the relation $$\displaystyle \sigma^{k+1} 
  \in \pi_{K} \left( \partial   b(\cdot , \sigma^{k})(D(u^{k+1})) \right)$$  
  \end{enumerate}
  \end{enumerate}

 We don't know yet how to prove that such a sequence converges to a solution of
 the problem (\ref{maineq1}), which is the weak form of problem (Pdisc). 
 
 Secondly, by exploiting the particular expression of the functions $b$ which
 appear in real plasticity problems, we may be able to prove that sequences 
 $\displaystyle (u^{k}, \sigma^{k})$  have convergent subsequences, for example
 by a boundedness argument. 
 
 Another, potentially very interesting subject, concerns Coulomb friction. This
 law can be expressed by a bipotential, \cite{saxfeng} \cite{bipo3}. It should be interesting to
 explore the corresponding  variational formulation, where the bifunctional will
 contain volume integrals as well as surface integrals. Related to this see
 also the paper \cite{laborde}.

\end{document}